\newtheorem{theorem}{Theorem}[section]
\newtheorem{lemma}[theorem]{Lemma}
\newtheorem{proposition}[theorem]{Proposition}
\newtheorem{corollary}[theorem]{Corollary}
\theoremstyle{definition}
\newtheorem{definition}[theorem]{Definition}
\newtheorem{remark}[theorem]{Remark}
\newtheorem{notation}[theorem]{Notation}
\newcommand{\Gl}{{\rm GL}}
\newcommand{\GL}{{\rm GL}}
\newcommand{\tDelta}{\widetilde{\Delta}}
\newcommand{\G}{\Gamma}
\newcommand{\ep}{\varepsilon}
\newcommand{\N}{\mathbb{N}}
\newcommand{\Z}{\mathbb{Z}}
\newcommand{\Q}{\mathbb{Q}}
\newcommand{\R}{\mathbb{R}}
\newcommand{\C}{\mathbb{C}}
\newcommand{\HH}{\mathbb{H}}
\DeclareMathOperator{\im}{Im}
\newcommand{\Tr}{\mathrm{Tr}}
\DeclareMathOperator{\Hom}{Hom}
\definecolor{darkpastelred}{rgb}{0.76, 0.23, 0.13}
\definecolor{darkred}{rgb}{0.55, 0.0, 0.0}
\definecolor{darkmagenta}{rgb}{0.55, 0.0, 0.55}
\definecolor{coolblack}{rgb}{0.0, 0.18, 0.39}
\definecolor{ceruleanblue}{rgb}{0.16, 0.32, 0.75}
\title[Central extensions and almost representations]{Central extensions and almost representations
}
\author{Marius Dadarlat and Forrest Glebe}\address{MD: Department of Mathematics, Purdue University, West Lafayette, IN 47907, USA} %\email{mdd@purdue.edu}
\address{FG: Department of Mathematical Sciences,
University of Hawaii at Manoa,
Honolulu, HI 96822, USA }
\date{\today}
\thanks{M.D. was partially supported by NSF grant \#DMS-2247334
\\
F.G. was supported by a Ross-Lynn grant in the 2023-2024 academic year.
}
\begin{document}

	%\tableofcontents
\begin{abstract}
{For a sequence of unital tracial $C^*$-algebras $(A_n,\tau_n),$ we  construct a canonical central extension of the unitary group $U(\ell^\infty (\mathbb{N},A_n)/c_0(\mathbb{N},A_n))$ 
by $Q(\mathbb{R})=c_0(\mathbb{N},\mathbb{R})/\mathbb{R}^\infty,$ using de la Harpe-Skandalis pre-determinant. For an asymptotic group homomorphism $\rho_n : \Gamma \to U(A_n),$ the corresponding pullback of the canonical central extension gives a 2-cohomology class in $H^2(\Gamma,Q(\mathbb{R}))$ which obstructs the perturbation of $(\rho_n)$ to a sequence of true homomorphisms of groups $\pi_n:\Gamma \to GL(A_n)$. The pairing of the obstruction class with elements of $H_2(\Gamma,\mathbb{Z})$ yields numerical invariants in $\tau_{n\,*} (K_0(A_n))$ that subsume the 
winding number invariants of Kazhdan, Exel and Loring.
 For generality, we allow bounded asymptotic homomorphisms to map the group $\Gamma$ into the general linear group of any sequence of tracial unital Banach algebras. In that case, the obstruction class belongs to $H^2(\Gamma,Q(\mathbb{C})),$ where $Q(\mathbb{C})=c_0(\mathbb{N},\mathbb{C})/\mathbb{C}^\infty.$ As an application, we show that 2-cohomology obstructs various stability properties under weaker assumptions than those found in existing literature.
 In particular, we show that the full group $C^*$-algebra  $C^*(\Gamma)$ of a discrete group $\Gamma$ is not $C^*$-stable if $H^2(\Gamma,\mathbb{R})\neq 0$ and in fact, $\G$ is not stable in operator norm with respect to tracial von Neumann algebras.
}
\end{abstract}
 \maketitle
	\section{Introduction}
 There are several known methods to construct  almost representations of a discrete group \(\Gamma\) that are far from true representations in the operator norm. At their core, these constructions are tied to certain cohomological invariants of \(\G\). Voiculescu  and Kazhdan  devised almost representations, relying either implicitly \cite{Voi:unitaries} or explicitly \cite{Kazhdan-epsilon}  on nontrivial central extensions of \(\Gamma\).
Ioana, Spaas and Wiersma \cite{Ioana-S-W:coho}
	used projective representations and 2-cohomology  to prove 
 the failure of lifting properties for full $C ^*$-algebras of countable groups with (relative) property $( \mathrm{T} )$.
 Burger, Ozawa, and Thom in \cite{BOT} built uniform almost representations that cannot be uniformly perturbed to true representations, using nonzero elements in the kernel of the comparison map \(H^2_b(\Gamma, \mathbb{R}) \rightarrow H^2(\Gamma, \mathbb{R})\) between bounded and standard group cohomology.

 As noted in the paper \cite{DECHIFFRE}  of de Chiffre, Glebsky,  Lubotzky, and Thom, it is known that the question of approximating asymptotic representations by true representations is related to a question about splitting group extensions. In both papers \cite{DECHIFFRE} and \cite{Ulam-new}, the authors use controlled asymptotics to construct extensions with abelian kernels. Then, they use the vanishing of 2-cohomology with abelian (nontrivial \(\Gamma\)-module) coefficients to improve the defects of asymptotic representations, eventually proving stability.

In this paper we abelianize the kernel of the extension associated with a sequence of asymptotic homomorphisms using the de la Harpe-Skandalis determinant. Due to the tracial property, we obtain in fact a central extension. We then use nonvanishing of 2-cohomology to show non-stability
 by this method.

Specifically, for a 2-cocycle $\sigma\in Z^2(\G,\R)$,  set $\omega_n=e^{2\pi i \sigma/n}\in Z^2(\G,\mathbb{T})$, and consider  the canonical sequence of unital maps
$\rho_n:\G \to U(L(\G,\omega_n))$ to the unitary groups of twisted group von Neumann algebras $L(\G,\omega_n)$. 
These maps factor through both the full and the reduced twisted $C^*$-algebras 
$C^*(\G,\omega_n)$ and $C^*_r(\G,\omega_n)$ and they constitute 
an asymptotic homomorphism. Indeed, since
$\rho_n(s)\rho_n (t)\rho_n (st)^{-1}=e^{2 \pi i \sigma(s,t)/n} 1_n$ we have that
\[\lim_{n\to \infty}\|\rho_{n}(s)\rho_{n}(t)-\rho_{n}(st)\|= 0, \quad\forall \, s,t \in \G.\] 
Moreover, if the 2-cocycle $\sigma$ is a bounded function, then
\[\lim_{n\to \infty} \sup_{s,t\in \G}\|\rho_{n}(s)\rho_{n}(t)-\rho_{n}(st)\|= 0.\]
\begin{theorem}\label{thm-vN}
  Let  $\G$ be a discrete countable group. Let $\sigma$ be a normalized 2-cocycle with $[\sigma]\in H^2(\G,\R)\setminus \{0\}.$ For the canonical sequence of maps $\rho_n:\G \to U(L(\G,e^{2\pi i \sigma/n}))$,  there exists no sequence of group homomorphisms $\pi_n:\G \to GL(L(\G,e^{2\pi i \sigma/n}))$ such that 
  $\lim_{n\to \infty}\left\|\rho_{n}(s)- \pi_{n}(s)\right\|=0,$ for all $s\in \G$.
\end{theorem}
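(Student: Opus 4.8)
The plan is to derive this from the general obstruction machinery of the paper. To the asymptotic homomorphism $(\rho_n)$ into the unitary groups of the tracial von Neumann algebras $(L(\G,\omega_n),\tau_n)$ — each $L(\G,\omega_n)$ being a finite von Neumann algebra with its canonical faithful normal trace, $\tau_n(1)=1$ — is attached the pullback under $(\rho_n)$ of the canonical central extension of $U(\ell^\infty(\N,L(\G,\omega_n))/c_0(\N,L(\G,\omega_n)))$ by $Q(\R)$, that is, a class $\kappa\in H^2(\G,Q(\R))$; the point of that construction is that if there existed group homomorphisms $\pi_n\colon\G\to GL(L(\G,\omega_n))$ with $\|\rho_n(s)-\pi_n(s)\|\to 0$ for every $s\in\G$, then $\kappa$ would vanish. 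So it suffices to prove $\kappa\neq 0$.

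The first step is to compute $\kappa$ via the de la Harpe--Skandalis pre-determinant. Because the cocycle defect $\rho_n(s)\rho_n(t)\rho_n(st)^{-1}=e^{2\pi i\sigma(s,t)/n}\,1$ is a central scalar, the recipe for the obstruction cocycle simplifies: choosing (pointwise in $s$) any norm-continuous path from $1$ to $\rho_n(s)$ in $GL(L(\G,\omega_n))$ and forming the loop built from these paths, the left-translation invariance of $\tDelta$ coming from the trace property collapses the pre-determinant of that loop to $\tDelta$ of the exponential path $[0,1]\ni r\mapsto e^{2\pi i r\sigma(s,t)/n}\,1$ from $1$ to $e^{2\pi i\sigma(s,t)/n}\,1$, whose value is $\sigma(s,t)/n$ since $\tau_n(1)=1$. (Changing the lifting paths alters the cocycle by a coboundary, exactly as in the general theory, so the class is unambiguous.) Hence $\kappa$ is represented by the $Q(\R)$-valued $2$-cocycle $(s,t)\mapsto[(\sigma(s,t)/n)_n]$; equivalently $\kappa=j_*[\sigma]$, where $j\colon\R\to Q(\R)=c_0(\N,\R)/\R^\infty$ is the homomorphism $r\mapsto[(r/n)_n]$, which is injective because $(r/n)_n\in\R^\infty$ forces $r=0$.

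It then remains only to see that $j_*[\sigma]\neq 0$, and this is formal: the line $j(\R)\subset Q(\R)$ has an $\R$-vector-space complement, giving an $\R$-linear retraction $\phi\colon Q(\R)\to\R$ with $\phi\circ j=\mathrm{id}_\R$; since all coefficient modules here carry the trivial $\G$-action, $\phi$ induces $\phi_*\colon H^2(\G,Q(\R))\to H^2(\G,\R)$ with $\phi_*\kappa=\phi_*j_*[\sigma]=[\sigma]\neq 0$, so $\kappa\neq 0$. By the obstruction theorem, no sequence $\pi_n\colon\G\to GL(L(\G,\omega_n))$ can satisfy $\lim_n\|\rho_n(s)-\pi_n(s)\|=0$ for all $s\in\G$.

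I expect the middle step to be the main obstacle: one must unwind the construction of the canonical central extension and fix the normalization of the pre-determinant carefully enough to identify the obstruction cocycle as precisely $(\sigma(s,t)/n)_n$ (up to sign and a universal constant), and in particular verify that the choice of lifting paths affects it only by a coboundary. The scalar, central nature of the defect is what makes this computation clean; the non-vanishing is then immediate from functoriality of $H^2$ in the coefficients. One should also note that $\sigma$ need not be bounded, so $(\rho_n)$ is in general only a pointwise asymptotic homomorphism — which is precisely why the obstruction lives in $H^2(\G,Q(\R))$ and why the statement concerns pointwise, rather than uniform, perturbations.
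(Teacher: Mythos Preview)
Your proposal is correct and follows essentially the same strategy as the paper: identify the obstruction class $[\rho]\in H^2(\Gamma,Q(\R))$, compute it as $j_*[\sigma]$, show $j_*$ is injective, and invoke the obstruction theorem (Theorem~\ref{thm:basic}). Two minor differences are worth noting. First, your description of the cocycle computation via lifting paths and loop holonomy is more elaborate than needed; the paper's cocycle is given directly by the formula $\omega_n(a,b)=\tfrac{1}{2\pi i}\tau_n\bigl(\log(\rho_n(a)\rho_n(b)\rho_n(ab)^{-1})\bigr)$ of equation~\eqref{eq:2-cocycle}, and since $\rho_n(a)\rho_n(b)\rho_n(ab)^{-1}=e^{2\pi i\sigma(a,b)/n}1$ is scalar this is immediately $\sigma(a,b)/n$ (this is Corollary~\ref{cor:proj-appl}), with no path choices entering. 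Second, for injectivity of $j_*$ the paper (Proposition~\ref{prompt}) pairs with a $2$-cycle $c\in Z_2(\Gamma,\Z)$ satisfying $\langle\sigma,c\rangle\neq 0$, supplied by the universal coefficient isomorphism $H^2(\Gamma,\R)\cong\Hom(H_2(\Gamma,\Z),\R)$, and computes $\langle j_*[\sigma],[c]\rangle=(\langle\sigma,c\rangle/n)_n\neq 0$ in $Q(\R)$; this is more concrete than your vector-space-complement retraction, though both arguments are valid.
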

As a corollary, we obtain the following
 \begin{theorem}\label{thm:Ai} Let  $\G$ be a discrete countable group.
	\begin{itemize}
		\item[(1)] If $ H^2(\Gamma,\R)\neq 0$, then $\Gamma$ is not local-to-local stable with respect the class of separable unital tracial $C^*$-algebras.
		\item[(2)] If the comparison map $J: H^2_b(\Gamma,\R)\to H^2(\Gamma,\R)$ is nonzero, then $\Gamma$ is not uniform-to-local stable with respect the class of separable unital tracial $C^*$-algebras.
	\end{itemize} 
\end{theorem}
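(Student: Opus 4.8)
The plan is to deduce both parts directly from Theorem~\ref{thm-vN}, the only new ingredient being that the twisted group von Neumann algebras occurring there may be replaced by twisted group $C^*$-algebras, which are separable unital tracial $C^*$-algebras whenever $\G$ is countable. Recall that to witness the failure of local-to-local stability of $\G$ with respect to a class $\mathcal C$ it suffices to produce a single sequence $A_n\in\mathcal C$ together with a pointwise asymptotic homomorphism $\rho_n\colon\G\to U(A_n)$ admitting no sequence of genuine homomorphisms $\pi_n\colon\G\to\GL(A_n)$ with $\|\rho_n(s)-\pi_n(s)\|\to 0$ for all $s\in\G$; for uniform-to-local stability one takes $\rho_n$ uniformly asymptotic instead.

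For part (1), choose a normalized cocycle $\sigma\in Z^2(\G,\R)$ with $[\sigma]\neq 0$ and set $\omega_n=e^{2\pi i\sigma/n}$. Let $A_n$ be either the full or the reduced twisted group $C^*$-algebra $C^*(\G,\omega_n)$ or $C^*_r(\G,\omega_n)$ (the full one being the version relevant to the assertion about $C^*(\G)$ in the abstract). For countable $\G$ these are separable, unital, and carry the canonical tracial state, so they lie in the class; the canonical unital maps $\rho_n\colon\G\to U(A_n)$, $s\mapsto u_s$, satisfy $\rho_n(s)\rho_n(t)=e^{2\pi i\sigma(s,t)/n}\rho_n(st)$, hence form a pointwise asymptotic homomorphism exactly as recorded in the introduction. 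Were $\G$ local-to-local stable with respect to separable unital tracial $C^*$-algebras, there would exist homomorphisms $\pi_n\colon\G\to\GL(A_n)$ with $\|\rho_n(s)-\pi_n(s)\|\to 0$ for every $s$. Composing with the canonical contractive unital $*$-homomorphism $A_n\to L(\G,\omega_n)$, which carries invertibles to invertibles and sends $u_s$ to the value $\rho_n(s)$ of the canonical map of Theorem~\ref{thm-vN}, we obtain homomorphisms $\G\to\GL(L(\G,\omega_n))$ approximating $\rho_n$ pointwise, contradicting Theorem~\ref{thm-vN}.

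For part (2), nonvanishing of $J$ supplies a bounded $2$-cocycle whose class in $H^2(\G,\R)$ is nonzero; replacing it by a cohomologous normalized cocycle (subtract the constant function equal to its value at $(e,e)$) keeps it bounded and keeps the ordinary class nonzero. With $\sigma$ bounded the same maps $\rho_n$ satisfy $\sup_{s,t}\|\rho_n(s)\rho_n(t)-\rho_n(st)\|=\sup_{s,t}|e^{2\pi i\sigma(s,t)/n}-1|\to 0$, i.e.\ they form a uniformly asymptotic homomorphism, and the argument of part (1) — now invoking uniform-to-local stability — again yields $\pi_n$ contradicting Theorem~\ref{thm-vN}. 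There is no essential obstacle here: the only content beyond Theorem~\ref{thm-vN} is the bookkeeping above, namely that the twisted $C^*$-algebras belong to the prescribed class, that $\rho_n$ factors through them with the stated multiplicativity, and that passing to $L(\G,\omega_n)$ costs nothing since the relevant $*$-homomorphism is contractive. Should the ambient notion of stability be phrased with quantifiers over finite subsets of $\G$ and tolerances rather than along sequences, the two formulations are matched by a routine diagonalization over a finite exhaustion of $\G$.
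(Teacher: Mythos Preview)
Your proof is correct. The route differs from the paper's only in organization: the paper proves Theorem~\ref{thm-vN} and Theorem~\ref{thm:Ai} simultaneously by taking $A_n$ to be either $C^*_r(\G,\sigma_{\theta_n})$ or $L(\G,\sigma_{\theta_n})$ and directly computing the obstruction class $[\rho]=q_*[\sigma]\neq 0$ via Proposition~\ref{prompt} and Theorem~\ref{thm:basic}, whereas you take Theorem~\ref{thm-vN} as a black box and transfer the contradiction to the separable setting by pushing forward along the contractive unital $*$-homomorphism $C^*_{(r)}(\G,\omega_n)\to L(\G,\omega_n)$. This push-forward step is exactly the invariance principle recorded as Proposition~\ref{prop:invariance}, so the underlying mechanism is the same; your packaging has the mild advantage of making explicit that the separable case follows formally from the von Neumann case, while the paper's direct computation avoids the intermediate appeal to Theorem~\ref{thm-vN}.
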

  We refer the reader to Definition~\ref{def:Cstar-stable} for these notions of stability.
  The comparison map $J: H^2_b(\Gamma,\R)\to H^2(\Gamma,\R)$ is known to be surjective for all  hyperbolic groups, \cite{Mineyev}.
Thus from Theorem~\ref{thm:Ai} we derive the following:
 \begin{corollary}
     \label{thm:AA}
	Let $\G$ be a  hyperbolic group.
	If $H^2(\G,\R)\neq 0$,
	then  $\Gamma$ is not uniform-to-local stable with respect the class of unital tracial $C^*$-algebras.
 \end{corollary}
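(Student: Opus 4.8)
The plan is to deduce the corollary by combining part~(2) of Theorem~\ref{thm:Ai} with Mineyev's surjectivity theorem \cite{Mineyev} and a trivial monotonicity observation about the stability notion. First I would record that, by \cite{Mineyev}, for any hyperbolic group $\Gamma$ the comparison map $J\colon H^2_b(\Gamma,\R)\to H^2(\Gamma,\R)$ is surjective. Under the hypothesis $H^2(\Gamma,\R)\neq 0$, surjectivity forces the image of $J$ to be nonzero, hence $J\neq 0$ as a map. This is the only place the hyperbolicity assumption enters.

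Next I would apply Theorem~\ref{thm:Ai}(2) verbatim: since $J$ is nonzero, $\Gamma$ fails to be uniform-to-local stable with respect to the class of separable unital tracial $C^*$-algebras. Concretely, this produces a discrete group $\Gamma$, a uniform asymptotic homomorphism into the unitary groups of a sequence of separable unital tracial $C^*$-algebras (indeed the twisted von Neumann / $C^*$-algebra construction $\rho_n\colon\Gamma\to U(C^*_r(\Gamma,e^{2\pi i\sigma/n}))$ from the discussion preceding Theorem~\ref{thm-vN}) that cannot be perturbed pointwise (let alone uniformly) to a sequence of genuine group homomorphisms into the corresponding general linear groups.

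Finally I would invoke monotonicity of non-stability in the target class: the separable unital tracial $C^*$-algebras form a subclass of all unital tracial $C^*$-algebras, so any witness to the failure of uniform-to-local stability within the smaller class is automatically a witness within the larger class. Hence $\Gamma$ is not uniform-to-local stable with respect to the class of unital tracial $C^*$-algebras, which is the assertion of Corollary~\ref{thm:AA}. I do not expect a genuine obstacle here: all the analytic content is carried by Theorem~\ref{thm:Ai} (which in turn rests on Theorem~\ref{thm-vN} and the de la Harpe--Skandalis pre-determinant machinery) and by the external input \cite{Mineyev}; the only point requiring a line of care is to confirm that "not uniform-to-local stable with respect to a class $\mathcal{C}$'' is, per Definition~\ref{def:Cstar-stable}, an existential statement over $\mathcal{C}$, so that it indeed transfers upward along class inclusions.
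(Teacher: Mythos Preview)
Your proposal is correct and matches the paper's approach exactly: the paper derives the corollary in one line from Theorem~\ref{thm:Ai}(2) together with Mineyev's surjectivity result \cite{Mineyev}. Your additional remark about passing from the separable class to the full class of unital tracial $C^*$-algebras via the upward-monotonicity of the existential failure statement is a valid clarification that the paper leaves implicit.
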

Assuming furthermore that the twisted group $C^*$-algebras of $\G$ admit MF quotients, we derive nonstability properties of $\G$ with respect to unitary groups $U(n)$, see Corollary~\ref{cor:321} and
\begin{theorem}\label{tprop:Ai}
	If $\G$ is a countable discrete group and for some $[\sigma]\in H^2(\Gamma,\R)\setminus \{0\}$  there  is a sequence 
	$\theta_n \searrow 0$  so that for each $n$,  the twisted full group $C^*$-algebra, $C^*(\Gamma,\sigma_{\theta_n})$ has a nonzero MF quotient, then $\Gamma$ is not matricially stable.
\end{theorem}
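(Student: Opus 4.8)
The plan is to convert the hypothesised MF quotients into a genuine asymptotic homomorphism of $\Gamma$ into finite-dimensional unitary groups, and then to play matricial stability against the obstruction class of the paper: stability would force that class to vanish, while the nonvanishing of $[\sigma]$ forces a nonzero winding number. The argument parallels Theorem~\ref{thm-vN}, with the matrix blocks of the MF quotients replacing the twisted group von Neumann algebras.

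\emph{Matricialising.} Set $\omega_n=\sigma_{\theta_n}=e^{2\pi i\theta_n\sigma}$ and let $q_n\colon C^*(\Gamma,\omega_n)\twoheadrightarrow B_n$ be the given nonzero MF quotient. Since $C^*(\Gamma,\omega_n)$ is unital so is $B_n$, and after compressing by a projection lifting $q_n(1)$ one gets a \emph{unital} embedding $B_n\hookrightarrow\prod_j M_{d(n,j)}/\bigoplus_j M_{d(n,j)}$, where (amplifying the blocks if necessary) $d(n,j)\to\infty$ as $j\to\infty$. Let $\rho_n\colon\Gamma\to U(C^*(\Gamma,\omega_n))$ be the canonical unital map ($\rho_n(s)\rho_n(t)=\omega_n(s,t)\rho_n(st)$) and put $\bar\rho_n=q_n\circ\rho_n\colon\Gamma\to U(B_n)$, a unital $\omega_n$-representation into a nonzero $C^*$-algebra. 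Composing with the embedding, lifting each $\bar\rho_n(s)$ blockwise to a unitary ($\bar\rho_n(e)\mapsto 1$), and enumerating $\Gamma=\{g_1,g_2,\dots\}$, I would choose for each $n$ an index $j(n)$ so large that, writing $m_n:=d(n,j(n))$ and $v^{(n)}_s\in U(m_n)$ for the $j(n)$-th block component, one has $v^{(n)}_e=1$, $m_n\ge\theta_n^{-2}$, and $\|v^{(n)}_sv^{(n)}_t-\omega_n(s,t)v^{(n)}_{st}\|\le\theta_n^2$ for all $s,t\in\{g_1,\dots,g_n\}$; this is possible because, for fixed $n$, the block sizes diverge while the blockwise cocycle defects vanish as $j\to\infty$, and these two asymptotics are unrelated. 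As $\theta_n\to 0$ gives $\omega_n(s,t)\to 1$, the sequence $(v^{(n)})$ is an asymptotic group homomorphism $\Gamma\to U(m_n)$.

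\emph{Stability versus obstruction.} Assume $\Gamma$ is matricially stable, so there are homomorphisms $\pi_n\colon\Gamma\to U(m_n)$ with $\|v^{(n)}_s-\pi_n(s)\|\to 0$ for all $s$. Viewing each $M_{m_n}$ as a unital tracial $C^*$-algebra with normalized trace, $(v^{(n)})$ is an asymptotic homomorphism into $U(M_{m_n})$ perturbed by $(\pi_n)$ into $U(M_{m_n})\subseteq GL(M_{m_n})$, so by the obstruction results of the paper its canonical class in $H^2(\Gamma,Q(\R))$ vanishes and hence pairs to $0$ with every $z\in H_2(\Gamma,\Z)$. I would then exhibit a $z$ along which the pairing is nonzero. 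By universal coefficients $H^2(\Gamma,\R)\cong\Hom(H_2(\Gamma,\Z),\R)$ (the Ext term vanishes, $\R$ being divisible), so since $[\sigma]\ne 0$ fix $z\in H_2(\Gamma,\Z)$ with $\lambda:=\langle[\sigma],z\rangle\ne 0$. Represent $z$ by a bar cycle $\sum_k\varepsilon_k[a_k|b_k]$ and, for $n$ so large that all $a_k,b_k,a_kb_k$ are among $g_1,\dots,g_n$, form the product $P_n=\prod_k\big(v^{(n)}_{a_k}v^{(n)}_{b_k}(v^{(n)}_{a_kb_k})^{-1}\big)^{\varepsilon_k}$ of defect unitaries around $z$. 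Since $\partial z=0$ the factors' determinants telescope to $\det P_n=1$, and since each factor lies within $\theta_n^2$ of the scalar $\omega_n(a_k,b_k)1$ we get $\|P_n-e^{2\pi i\theta_n\lambda}1_{m_n}\|\le C(z)\theta_n^2$ with $C(z)=\sum_k|\varepsilon_k|$. Thus, for $n$ large, $\log P_n$ is defined and $\kappa_n:=\tfrac1{2\pi i}\Tr\log P_n$ is an integer — because $\exp(\Tr\log P_n)=\det P_n=1$ — with $|\kappa_n-m_n\theta_n\lambda|\le\tfrac{C(z)}{\pi}\theta_n^2 m_n$. As $m_n\ge\theta_n^{-2}$ we have $m_n\theta_n|\lambda|\ge|\lambda|/\theta_n\to\infty$ and $\tfrac{C(z)}{\pi}\theta_n^2 m_n\le\tfrac14 m_n\theta_n|\lambda|$ for large $n$, whence $|\kappa_n|\ge\tfrac34 m_n\theta_n|\lambda|\to\infty$; in particular $\kappa_n\ne 0$ for all large $n$. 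But the pairing of the obstruction class with $z$ is precisely the sequence of Exel--Loring-type winding numbers $[(\kappa_n/m_n)_n]$, a null sequence that is not finitely supported, hence nonzero in $Q(\R)=c_0(\N,\R)/\R^\infty$. This contradicts its vanishing, so $\Gamma$ is not matricially stable.

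\emph{Main obstacle.} The crux is the quantitative control: one must take the matrix sizes $m_n$ large and the lifting defects small \emph{at the same time}, and fast enough relative to $\theta_n$, so that the integer $\kappa_n$ tracks $m_n\theta_n\lambda$ and never returns to $0$. The MF hypothesis is exactly what provides this freedom. With that in hand, identifying $[(\kappa_n/m_n)_n]$ with the $z$-pairing of the canonical obstruction class is the same de la Harpe--Skandalis pre-determinant computation that underlies Theorem~\ref{thm-vN}.
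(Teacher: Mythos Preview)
Your argument is correct and is essentially the paper's \emph{alternate} proof (the one sketched after the main proof of Theorem~\ref{tprop:A}): pick a diagonal block from each MF quotient so that the matrix-valued defect unitaries approximate the scalar $e^{2\pi i\theta_n\sigma(s,t)}$, pair with a 2-cycle $z$ on which $\sigma$ is nonzero, and conclude that the normalized winding numbers $\kappa_n/m_n\approx\theta_n\lambda$ are eventually nonzero, so the obstruction class does not vanish. Your extra bookkeeping---the determinant telescoping $\det P_n=1$ (which follows from $\partial z=0$) and the integrality of $\kappa_n$---is correct and in fact recovers the paper's Proposition~\ref{prop:integral} in this special case, but it is not needed: the normalized quantity $\kappa_n/m_n$ being nonzero for large $n$ is what matters, and that already follows from $|\kappa_n/m_n-\theta_n\lambda|\le\frac{C(z)}{\pi}\theta_n^2$ without any condition on $m_n$. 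So the requirement $m_n\ge\theta_n^{-2}$ can be dropped.

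It is worth noting that the paper's \emph{primary} proof takes a rather different and somewhat slicker route. Instead of extracting a single matrix block and computing winding numbers by hand, it argues by contradiction at the level of the MF corona algebras themselves: assuming matricial stability, the $(S_n,\delta_n)$-characterization of stability (Remark~\ref{rem:stability}) lets one lift the projective maps $\rho_n:\Gamma\to U(D_n)$, with $D_n=\prod M_{k_n(i)}/\bigoplus M_{k_n(i)}$, through the quotient map $P_n\to D_n$ and perturb componentwise in $P_n$, then push the perturbation back down to $D_n$. This contradicts the already-proved Theorem~\ref{thm:A}(1), applied to the tracial $C^*$-algebras $D_n$. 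That approach avoids the diagonal choice and the explicit pairing estimate, at the cost of an extra layer of abstraction; your approach is more concrete and makes the connection with the Exel--Loring/Kazhdan winding numbers transparent.
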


 Our approach is based on the construction of a canonical central extension
 \begin{equation}\label{eq:0001}
	\begin{tikzcd}
		1 \ar[r]&	Q(\C)  \ar[r]& E \ar[r]&  \GL(A)\ar[r] \ar[r] & 1
	\end{tikzcd}
\end{equation}
where $Q(\C)=c_0(\N,\C)/c_{00}(\N,\C)$ and where $GL(A)$ is the  group  of invertible elements of the corona $C^*$-algebra $A=\ell^\infty(\N,A_n)/c_{0}(\N,A_n)$ for a given sequence of unital tracial $C^*$-algebras $(A_n,\tau_n)$ such that $\sup_n \|\tau_n\|<\infty$. For a group homomorphism $\rho:\G \to \GL(A),$ the pullback of the extension \eqref{eq:0001} is a central extension whose cohomology class, denoted $[\rho]\in H^2(\G,Q(\C)),$
obstructs the lifting of $\rho$ to a sequence a representations $\pi_n:\G \to \GL(A_n)$.
The construction of \eqref{eq:0001} is based on the de la Harpe-Skandalis (pre)determinant. For added generality, we allow $A_n$ to be unital tracial Banach algebras. If $A_n$ are $C^*$-algebras and one replaces $\GL(A)$ by the unitary group $U(A),$ then one obtains a central extension as above with $Q(\R)=c_0(\N,\R)/c_{00}(\N,\R)$ in place of $Q(\C)$. Furthermore we show that
the class $[\rho]\in H^2(\G,Q(\R))$ can be lifted to a class in $H^2(\G,H)$ where 
\[H:=\frac{c_0(\N,\tau_{n*}\left(K_0(A_n)\right)}{c_{00}(\N,\tau_{n*}\left(K_0(A_n)\right)}.\]
This means that the components $\langle \rho_n,c\rangle$ of the Kronecker paring between $[\rho]$ and any 2-homology  class $[c]\in H_2(\G,\Z)$ are eventually elements of $\tau_{n*}(K_0(A_n))$.
We connect these components to push-forward images of elements of $K_0(\ell^1(\G))$ under $\rho_n$  by invoking an index theorem from \cite{DDD}, see {Corollary}~\ref{index}.

While it was already known that the non-vanishing of $H^2(\G,\R)$ implies matricial nonstability for large classes of groups satisfying certain geometric conditions, see for example \cite{ESS-published}, \cite{CCC}, \cite{DDD},\cite{arXiv:2204.10354},\cite{MR4600056}, \cite{EEE}, the viewpoint that we promote here leads to nonstability properties for general discrete groups with nonvanishing 2-cohomology.

The paper is organized as follows.
 Section~\ref{sec:2}  reviews the de la Harpe-Skandalis determinant and 2-(co)homology. Section~\ref{sec:3}  is devoted to the central extension \eqref{eq:0001} and the cohomology classes of its pullbacks arising from asymptotic homomorphisms.  
Section~\ref{sec:4}  discusses the connection with K-theory invariants associated an asymptotic homomorphisms.
In Sections~\ref{sec:5}  and ~\ref{sec:6}  we derive non-stability properties arising from nonvanishing of $H^2(\G,\R)$.
\section{Preliminaries}\label{sec:2}
Let $A$ be a complex Banach algebra endowed with a continuous tracial linear map $\tau:A \to F$ to a complex Banach space $F.$ Thus $\tau(xy)=\tau(yx)$ for all $x,y\in A.$
The canonical extension of $\tau$ to the algebra $M_\infty(A)$ is denoted again by $\tau$. If $F=\C$, then the map $\tau$ is called a trace and we say that $A$ is a tracial Banach algebra.

If \( A \) has a unit, \( GL_n(A) \) represents the group of invertible elements in \( M_n(A) \), equipped with the topology induced by the norm. If \( A \) lacks a unit, let \( \widetilde{A} \) denote the algebra obtained by adjoining a unit, which becomes a Banach algebra under a suitable norm. In this case, \( GL_n(A) \) refers to the topological subgroup of \( GL_n(\widetilde{A}) \) consisting of elements of the form \( 1 + a \), where \( a \in M_n(A) \). In all instances, \( GL_\infty(A) \) denotes the inductive limit of \( (GL_n(A))_{n \geq 1} \), with respect to the inclusions \( a \longmapsto \left(\begin{array}{ll}a & 0 \\ 0 & 1\end{array}\right) \). This is a topological group, and its connected component is denoted by \( GL_\infty^0(A) \).

\begin{definition}\label{def:LLtau} Define 
$L_\tau:\{u\in GL_\infty(A):\|u-1\|<1 \}\to F$ by 
\[L_\tau(u)=\frac{1}{2\pi i}\tau(\log(u))\]
 where $\log$ is defined using a power series centered at 1, $\log(u)=-\sum_{n=1}^\infty (1-u)^n/n$.
\end{definition}
Note that $\|\log(u)\|\leq 2\|u-1\|$ if $\|u-1\|<1/2.$

In order to prove certain properties of $L_\tau,$ it will be helpful to appeal to de la Harpe-Skandalis pre-determinant.
\begin{definition}[\cite{paper:DeLaHarpeSkandalis}]
Let $\xi:[0,1]\rightarrow \Gl_\infty^0(A)$ be a piecewise-smooth, continuous function. The de la Harpe-Skandalis pre-determinant is defined by
$$\tDelta_\tau(\xi)=\frac{1}{2\pi i}\tau\left(\int_0^1\dot{\xi}(t)\xi(t)^{-1}dt\right)\in F.$$
\end{definition}

\begin{proposition}[\cite{paper:DeLaHarpeSkandalis}]\label{dlhsprop} For paths  in $\Gl_\infty^0(A),$ we have
\begin{enumerate}[(a)]   
\item If multiplication of paths is defined pointwise, $\tDelta_\tau(\xi\cdot\eta)=\tDelta_\tau(\xi)+\tDelta_\tau(\eta)$.
    \item If $||\xi(t)-1||<1$ for all $t,$ then $\tDelta_\tau(\xi)=\frac{1}{2\pi i}(\tau(\log(\xi(1))-\log(\xi(0)))$.
    \item If $\xi$ is homotopic to $\eta$ with fixed endpoints, then $\tDelta_\tau(\xi)=\tDelta_\tau(\eta)$. 
    \item  If $v\in\Gl_\infty(A)$, then $\tDelta_\tau(v\xi v^{-1})=\tDelta_\tau(\xi)$.
    \item Let $p \in M _{\infty}(A)$ be an idempotent and let $\xi_p$ be the loop $\xi_p(t)=(1-p)+e^{2\pi i t}p$, $t\in [0,1]$; then $\tDelta(\xi_p)=\tau(p).$
\end{enumerate}
\end{proposition}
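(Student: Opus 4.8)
The plan is to derive all five properties from the single tracial identity $\tau(xy)=\tau(yx)$ (extended as usual to $M_\infty(A)$), combined with termwise differentiation of norm-convergent series, the fundamental theorem of calculus, and the fact that $\tau$ is bounded and linear so commutes with Bochner integrals. I would treat them in the order (a), (d), (b), (e), (c); only (c) has any real content.

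For (a), differentiating the pointwise product gives $\frac{d}{dt}(\xi\eta)\,(\xi\eta)^{-1}=\dot\xi\xi^{-1}+\xi(\dot\eta\eta^{-1})\xi^{-1}$; applying $\tau$, using the trace property to remove the conjugation by $\xi(t)$ in the second term, and integrating over $[0,1]$ yields additivity (note $\xi\eta$ again takes values in $\Gl_\infty^0(A)$ since the identity component is a subgroup). Property (d) is the same one-line computation: $\frac{d}{dt}(v\xi v^{-1})\,(v\xi v^{-1})^{-1}=v(\dot\xi\xi^{-1})v^{-1}$, and $\tau$ is conjugation-invariant; the only thing to check is that $v\xi v^{-1}$ still takes values in $\Gl_\infty^0(A)$, which holds because conjugation by $v\in\Gl_\infty(A)$ is a topological-group automorphism of $\Gl_\infty(A)$ and therefore preserves the identity component.

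For (b), since $[0,1]$ is compact and $\xi$ is continuous we have $r:=\sup_t\|\xi(t)-1\|<1$, so $\log\xi(t)=-\sum_{n\ge 1}(1-\xi(t))^n/n$ converges uniformly in $t$ and may be differentiated term by term. Differentiating $(1-\xi)^n$ by Leibniz gives a sum of $n$ words, each of which maps under $\tau$ to $\tau\big((1-\xi)^{n-1}\dot\xi\big)$ by cyclicity; summing the resulting series and recognizing the Neumann series $\sum_{m\ge 0}(1-\xi)^m=\xi^{-1}$ gives $\frac{d}{dt}\tau(\log\xi(t))=\tau(\xi^{-1}\dot\xi)=\tau(\dot\xi\xi^{-1})$, and integrating over $[0,1]$ (with $\tau$ pulled through the integral) gives (b). Property (e) is then a direct check: from $p^2=p$ one gets $\xi_p(t)^{-1}=(1-p)+e^{-2\pi i t}p$, hence $\dot\xi_p(t)\xi_p(t)^{-1}=2\pi i\,p$, so $\tDelta_\tau(\xi_p)=\frac{1}{2\pi i}\tau\big(\int_0^1 2\pi i\,p\,dt\big)=\tau(p)$; note $\xi_p$ is a loop based at $1$, so it lies in $\Gl_\infty^0(A)$. (One cannot invoke (b) directly, since $\|\xi_p(t)-1\|$ may exceed $1$, but the computation costs nothing.)

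The real content is (c). Given a homotopy $\xi_s(t)$, $(s,t)\in[0,1]^2$, piecewise smooth in both variables with $\xi_s(0)$ and $\xi_s(1)$ independent of $s$, I would differentiate $s\mapsto\tDelta_\tau(\xi_s)$ under the integral and the functional $\tau$ to get $\frac{d}{ds}\tDelta_\tau(\xi_s)=\frac{1}{2\pi i}\int_0^1\tau\big(\partial_s(\partial_t\xi\cdot\xi^{-1})\big)\,dt$. Expanding both mixed partials and using $\partial_s(\xi^{-1})=-\xi^{-1}(\partial_s\xi)\xi^{-1}$ (and the analogue in $t$) gives the identity
\[
\partial_s(\partial_t\xi\cdot\xi^{-1})-\partial_t(\partial_s\xi\cdot\xi^{-1})=(\partial_s\xi)\xi^{-1}(\partial_t\xi)\xi^{-1}-(\partial_t\xi)\xi^{-1}(\partial_s\xi)\xi^{-1};
\]
writing $a=\xi^{-1}\partial_s\xi$, $b=\xi^{-1}\partial_t\xi$, the right-hand side maps under $\tau$ to $\tau(ab)-\tau(ba)=0$, so $\tau\big(\partial_s(\partial_t\xi\cdot\xi^{-1})\big)=\partial_t\,\tau(\partial_s\xi\cdot\xi^{-1})$. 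The fundamental theorem of calculus in $t$ then gives $\frac{d}{ds}\tDelta_\tau(\xi_s)=\frac{1}{2\pi i}\big[\tau(\partial_s\xi_s(t)\cdot\xi_s(t)^{-1})\big]_{t=0}^{t=1}=0$ because the endpoints are fixed, and integrating in $s$ yields $\tDelta_\tau(\xi_0)=\tDelta_\tau(\xi_1)$. I expect the only obstacles here to be bookkeeping: reducing to a homotopy that is genuinely piecewise smooth in $(s,t)$ — which one may always arrange, by subdividing $[0,1]^2$ into rectangles on which $\xi$ is smooth or by a standard approximation of the given continuous homotopy rel endpoints inside the open set $\Gl_\infty^0(A)$ (everything taking place in a fixed $\Gl_N(A)$) — and justifying the interchange of $\tau$, $\partial_s$, and $\int_0^1 dt$; once those are in place the trace identity does all the work.
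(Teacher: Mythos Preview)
Your arguments are correct and are essentially the standard ones. Note, however, that the paper does not supply a proof of this proposition at all: it is stated with a citation to de~la~Harpe--Skandalis and used as a black box, so there is no ``paper's own proof'' to compare against. What you have written is a faithful reconstruction of the original argument; in particular your treatment of (c)---computing $\partial_s(\partial_t\xi\cdot\xi^{-1})-\partial_t(\partial_s\xi\cdot\xi^{-1})$, killing it with the trace identity, and integrating---is exactly how de~la~Harpe and Skandalis establish homotopy invariance, and your caveat about first passing to a piecewise-smooth homotopy rel endpoints is the right bookkeeping remark.
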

For $u\in \GL_\infty(A)$ with $\|u-1\|<1,$ set $\xi_u(t)=(1-t)1+tu$. Then by {\em (b)} above, it follows that 
\begin{equation}\label{def:Ltau}
    L_\tau(u)=\tDelta_\tau(\xi_u).
\end{equation}

\begin{proposition}\label{prop-L} Let $A$ and $\tau:A \to F$ be as above. Then
\begin{enumerate}[(1)]
    \item If $||u_i-1||<1/4,$ $i=1,2,$ then $L_\tau(u_1u_2)=L_\tau(u_1)+ L_\tau(u_2)$.
    \item For any  $u,v\in\Gl_\infty(A)$ with $\|u-1\|<1,$ we have $L_\tau(u)=L_\tau(vuv^{-1}).$ 
    \item If $\|u-1\|<1,$ then  $L_\tau(u^{-1})=-L_\tau(u)$.
\end{enumerate}
\end{proposition}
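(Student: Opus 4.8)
The three identities will all reduce quickly to Proposition~\ref{dlhsprop} once $L_\tau$ is rewritten through the pre-determinant: by \eqref{def:Ltau} one has $L_\tau(u)=\tDelta_\tau(\xi_u)$ for the affine path $\xi_u(t)=(1-t)1+tu$, which runs inside $\Gl_\infty^0(A)$ from $1$ to $u$ whenever $\|u-1\|<1$. In each case my plan is to produce a convenient path from $1$ to the element in question and evaluate its pre-determinant in two ways. The one point that will genuinely need care is the domain of $L_\tau$: although $\|u-1\|<1$, the quantities $\|vuv^{-1}-1\|$ and $\|u^{-1}-1\|$ may exceed $1$, so the power series defining $\log$ need not converge; one then reads $L_\tau$ off the holomorphic functional calculus, which is legitimate since $vuv^{-1}$ and $u$ share the same spectrum, contained in $\{|z-1|<1\}$, while the spectrum of $u^{-1}$ lies in $\{\operatorname{Re}z>1/2\}$, both disjoint from $(-\infty,0]$. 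With this reading the identity $\tDelta_\tau(\xi_x)=\tfrac{1}{2\pi i}\tau(\log x)$ persists whenever the spectrum of $x$ avoids $(-\infty,0]$, by a short homotopy argument based on Proposition~\ref{dlhsprop}(b)--(c). I expect this bookkeeping, rather than any substantive difficulty, to be the main thing to watch.

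For (1), I would take the pointwise product path $\eta:=\xi_{u_1}\cdot\xi_{u_2}$, which runs from $1$ to $u_1u_2$. Writing $u_i=1+a_i$ with $\|a_i\|<1/4$ gives $\eta(t)-1=t(a_1+a_2)+t^2a_1a_2$, hence $\|\eta(t)-1\|\le\tfrac14+\tfrac14+\tfrac1{16}<1$ for all $t\in[0,1]$; in particular $\|u_1u_2-1\|<1$. By Proposition~\ref{dlhsprop}(a) together with \eqref{def:Ltau}, $\tDelta_\tau(\eta)=\tDelta_\tau(\xi_{u_1})+\tDelta_\tau(\xi_{u_2})=L_\tau(u_1)+L_\tau(u_2)$, while Proposition~\ref{dlhsprop}(b) applied to $\eta$ gives $\tDelta_\tau(\eta)=\tfrac{1}{2\pi i}\big(\tau(\log\eta(1))-\tau(\log\eta(0))\big)=\tfrac{1}{2\pi i}\tau(\log(u_1u_2))=L_\tau(u_1u_2)$. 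Comparing the two evaluations proves (1); the $\tfrac14$ hypothesis is used only to keep $\eta$ inside the unit ball about $1$.

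For (2), the key observation is that conjugation commutes with affine combinations, so $v\,\xi_u(t)\,v^{-1}=(1-t)1+t\,vuv^{-1}=\xi_{vuv^{-1}}(t)$, i.e.\ $v\,\xi_u\,v^{-1}=\xi_{vuv^{-1}}$ as paths; Proposition~\ref{dlhsprop}(d) then yields $L_\tau(u)=\tDelta_\tau(\xi_u)=\tDelta_\tau(v\,\xi_u\,v^{-1})=\tDelta_\tau(\xi_{vuv^{-1}})=L_\tau(vuv^{-1})$, the last step by \eqref{def:Ltau} in the sense explained above. (Equivalently, $\log(vuv^{-1})=v(\log u)v^{-1}$ and the trace property give $\tau(\log(vuv^{-1}))=\tau(\log u)$ at once.) For (3), I would left-translate $\xi_u$ by $u^{-1}$: the path $\eta(t):=u^{-1}\xi_u(t)=t\,1+(1-t)u^{-1}$ runs from $u^{-1}$ to $1$, and one checks $\eta(t)=\xi_{u^{-1}}(1-t)$, so $\eta$ is $\xi_{u^{-1}}$ traversed backwards; reversing a path negates its pre-determinant (immediate from the integral defining $\tDelta_\tau$), whence $\tDelta_\tau(\eta)=-\tDelta_\tau(\xi_{u^{-1}})=-L_\tau(u^{-1})$. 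On the other hand $\dot\eta\,\eta^{-1}=u^{-1}(\dot\xi_u\,\xi_u^{-1})u$, so the tracial property of $\tau$ gives $\tDelta_\tau(\eta)=\tDelta_\tau(\xi_u)=L_\tau(u)$, and therefore $L_\tau(u^{-1})=-L_\tau(u)$. (When $\|u-1\|$ is small enough that $u$ and $u^{-1}$ both lie in the $\tfrac14$-ball this also drops straight out of (1) via $L_\tau(u)+L_\tau(u^{-1})=L_\tau(1)=0$; the pre-determinant argument removes that restriction.)
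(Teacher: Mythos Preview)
Your proof is correct and follows essentially the same route as the paper, reducing everything to Proposition~\ref{dlhsprop} via \eqref{def:Ltau}. Part~(2) is identical; part~(3) is the same identity $\xi_{u^{-1}}=u^{-1}\xi_u(1-\cdot)$ used in the paper, just evaluated by a direct trace computation rather than by invoking Proposition~\ref{dlhsprop}(a) with the constant path $u^{-1}$.

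The one small but genuine difference is in part~(1): the paper constructs the explicit affine homotopy $H(s,t)=s\,\xi_{u_1u_2}(t)+(1-s)\,\xi_{u_1}(t)\xi_{u_2}(t)$ and invokes Proposition~\ref{dlhsprop}(c), whereas you bypass the homotopy by applying Proposition~\ref{dlhsprop}(b) directly to the product path $\xi_{u_1}\xi_{u_2}$, having checked it stays in the unit ball about~$1$. Your route is marginally cleaner; the paper's buys nothing extra here. Your discussion of the domain issue for $L_\tau(vuv^{-1})$ and $L_\tau(u^{-1})$ is a point the paper leaves implicit: since $\|u^{-1}-1\|$ can exceed~$1$ when $\|u-1\|<1$, the power-series definition of $L_\tau$ in Definition~\ref{def:LLtau} does not literally apply, and one must read $L_\tau(u^{-1})$ as $\tDelta_\tau(\xi_{u^{-1}})$ (which is well-defined because $\xi_{u^{-1}}=u^{-1}\xi_u(1-\cdot)$ stays invertible). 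You are right to flag this.
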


\begin{proof}
{(1)} By the above discussion, it suffices to show that 
$$\tDelta_\tau(\xi_{u_1u_2})=\tDelta_\tau(\xi_{u_1})+\tDelta_\tau(\xi_{u_2}).$$
By Proposition~\ref{dlhsprop} {\em(a)}, the right side is equal to
$\tDelta_\tau(\xi_{u_1}\xi_{u_2}).$
By Proposition~\ref{dlhsprop} {\em(c)} it suffices to find a fixed-endpoint homotopy from $\xi_{u_1u_2}$ to $\xi_{u_1}\xi_{u_2}$. To do this, define
$$H(s,t)=s\xi_{u_1 u_2}(t)+(1-s)\xi_{u_1}(t)\xi_{u_2}(t).$$
One checks immediately that  $\|H(s,t)-1\|<1,$  so we are done.

{(2)} Note that
{\[L_\tau(vuv^{-1})=\tDelta_\tau(\xi_{vuv^{-1}})=\tDelta_\tau(v \xi_uv^{-1})=\tDelta_\tau(\xi_u)=L_\tau(u)\]}

{(3)} If $\|u-1\|<1,$ then
{\[L_\tau(u^{-1})=\tDelta_\tau(\xi_{u^{-1}})=\tDelta_\tau(u^{-1}\cdot \xi_{u}(1- t))=\tDelta_\tau(u^{-1})+\tDelta_\tau(\xi_{u}(1- t))=0-\tDelta_\tau(\xi_u)=-L_\tau(u)\]}
\end{proof}

\begin{definition}[\cite{paper:DeLaHarpeSkandalis}]
The de la Harpe-Skandalis determinant is the group homomorphism defined by the mapping 
$$
\Delta_\tau: \GL_{\infty}^0(A) \longrightarrow F / {\tau_*}\left(K_0(A)\right)
$$
which associates to an element $u$ in the domain, the class modulo $\tau_*\left(K_0(A)\right)$ of $\widetilde{\Delta}_\tau(\xi)$, where $\xi$ is any piecewise differentiable path in $\GL _{\infty}^0(A)$ from $1$ to $u$. If $A$ is not unital, one extends $\tau$ to $\tilde{A}$ by putting $\tau(1)=0$.
\end{definition}
Since the range of $\Delta_\tau$ is an abelian group, $\Delta_\tau$ vanishes on the commutator subgroup \\ $[\GL_{\infty}^0(A),GL_{\infty}^0(A)]=[GL_{\infty}(A),GL_{\infty}(A)]$. The equality of these commutator subgroups is a well-known consequence of  Whitehead's lemma.

We will use homology and cohomology with coefficients in abelian groups $Q$ viewed as trivial $\Gamma$-modules. The reader is referred to \cite[Chapter II.3]{iBrown:book-cohomology} for more background information.
Let $C_k(\Gamma)$ consist of formal linear combinations of elements of~$\Gamma^k$ with coefficients in $Q$. We write a typical element of $C_2(\Gamma)$ as
$$\sum_{j=1}^m k_j[a_j|b_j]$$
with $a_j,b_j\in\Gamma$ and $k_j\in Q$. There are boundary maps $\partial_2:C_2(\Gamma)\rightarrow C_1(\Gamma)$ defined by
$$\partial_2[a|b]=[a]-[ab]+[b]$$
and $\partial_3:C_3(\Gamma)\rightarrow C_2(\Gamma)$ defined by
$$\partial_3[a|b|c]=[b|c]-[ab|c]+[a|bc]-[a|b].$$
Then $H_2(\Gamma;Q):=\ker(\partial_2)/\im(\partial_3)$. An element of $Z_2(\G,Q):=\ker(\partial_2)$ is referred to as a {2-cycle} and an element in $\im(\partial_3)$ is referred to as a {2-boundary}.

Let us recall now the definition of $2$-cohomology $H^2(\G,Q).$
A 2-cocycle $\sigma:\G^2 \to Q$ 
is a function that satisfies the equation
\begin{equation}\label{eq:cocyle}
	\sigma(a,b)+\sigma(ab,c)=\sigma(a,bc)+\sigma(b,c),\quad \text{for all}\quad  a,b,c \in \G.
\end{equation}
The group of cocycles is denoted by $Z^2(\G,Q).$
A {2-coboundary} is a 2-cocycle  that can be written in the form
$$\sigma(a,b)=\partial\, \gamma (a,b)=\gamma(a)-\gamma(ab)+\gamma(b)$$ for some function $\gamma:\G \to Q$. $H^2(\Gamma;Q)$ is defined to be the group of 2-cocycles, mod the subgroup of 2-coboundaries. The group operation is pointwise addition.
By replacing $\sigma$ by $\sigma+\partial \,\gamma,$ where $\gamma:\G \to Q$ is defined by $\gamma(a)=-\sigma(e,a)$ for $a\in \G,$ one can obtain a 2-cocycle satisfying 
\[\sigma(a,e)=\sigma(e,a)=0.\]
A cocycle satisfying the above equation is called {\em normalized}.

The {Kronecker pairing} is the bilinear map $H^2(\Gamma;Q)\times H_2(\Gamma;\Z)\to Q$ defined by 
$$\langle [\sigma],[c]\rangle=\sum_{j=1}^m k_j\sigma(a_j,b_j),$$
where $c=\sum_{j=1}^m k_j[a_j|b_j]\in Z_2(\G,\Z)$.  By the universal coefficient theorem, if $Q$ is divisible, then the {Kronecker pairing} induces an isomorphism $H^2(\Gamma;Q)\cong\Hom(H_2(\G,\Z),Q)$. 

By a classic result in algebra \cite{iBrown:book-cohomology}, the second cohomology group \( H^2(\Gamma, Q) \) classifies all central extensions of \( \Gamma \) by \( Q \). 
Given a central extension,
\[0\to Q \stackrel{j}{\longrightarrow} E \stackrel{q}{\longrightarrow}\G \to 1\]
we can associate to it a normalized 2-cocycle $\sigma$  by choosing a unital set theoretic section $\gamma$ of $q$
and define $\sigma(a,b)=j^{-1}(\gamma(a)\gamma(b)\gamma(ab)^{-1}).$
Conversely, if a normalized 2-cocycle $\sigma$ is given, one constructs a central extension
where the set $E=Q \times \G$ is endowed with multiplication $(x,a)\cdot (y,b)=(x+y+\sigma(a,b),ab)$, and $j(x)=(x,1),$ $q(x,a)=a,$  $x,y\in Q$ and $a,b\in \G.$
We also consider the section $\gamma:\G \to E,$ $\gamma(a)=(0,a).$
Then $\gamma(a)\gamma(b)\gamma(ab)^{-1}=(\sigma(a,b),1)=j(\sigma(a,b)).$
\section{Canonical central extensions}\label{sec:3}

Let $(A_{n})_n$ be a sequence of unital  Banach algebras.
Consider the Banach algebra 
$$A=\ell^\infty(\N,A_n)=\{(a_n)_n\in \prod A_n:\, \sup\nolimits_{n}\|a_n\|<\infty\}$$ and its two-sided  ideals $$J=c_0(\mathbb{N},A_n)=\{(a_n)_n\in \prod A_n:\, \lim\nolimits_{n} \|a_n\|=0\}.$$
$$J_0=c_{00}(\mathbb{N},A_n)=\{(a_n)_n\in \prod A_n:\,\, \exists k \,\, \text{such that} \, \,a_n=0\,\,  \text{for}\,\, n\geq k\}.$$
$J_0\subset J$ and $J$ is closed in $A$.
 The unit of $A_n$ is denoted by $1_n$.
Consider the following groups (we will adjoin the unit of $A$ to $J$ in the definitions of $\GL(J)$ and $\GL(J_0)$):
$$
\begin{aligned}
&GL(A)\cong P =\{\left(u_{n}\right)_{n}
\in\prod_{n=1}^{\infty} \GL(A_{n}): \,\,\sup\nolimits_{n}||u_n||<\infty \,\, \text{and} \,\, \sup\nolimits_{n}||u_n^{-1}||<\infty\}, \\ 
&GL(J)\cong P_{1}=\left\{\left(u_{n}\right)_{n}\in P:\left\|u_{n}-1_n\right\| \rightarrow 0\right\}, \\
&GL(J_0)\cong P_{0}=\bigoplus_{n=1}^{\infty} G L\left(A_{n})  =\left\{\left(u_{n}\right)_{n}\in P_1:  \,\, \exists k \,\, \text{such that}\,\, u_{n}=1_{n} \text { for } n \geqslant k\right\}\right.
\end{aligned}
$$

The inclusions of normal subgroups $P_{0} \subset P_{1} \subset P$ give an exact sequence of groups
\begin{equation}\label{eq:0000}
 1 \rightarrow \GL(J)/\GL(J_0) \rightarrow \GL(A)/\GL(J_0) \stackrel{\nu}\longrightarrow \GL(A/J) \rightarrow 1
\end{equation}
\begin{equation}\label{eq:000} \text{or equivalently}\qquad
 1 \rightarrow P_1/P_0 \rightarrow P/P_0 \stackrel{\nu}\longrightarrow P/P_1 \rightarrow 1
\end{equation}
We will abuse notation and write elements of $P/P_0$ as $(u_n)_n$ with the understanding that finitely many coordinates $u_n$ are neither specified nor determined. 

{
Suppose now that each Banach algebra $A_n$  admits a  trace $\tau_n$ such that $\sup_n \|\tau_n\|<\infty.$
Then we define a continuous tracial linear map
\[\tau:A \to \ell^\infty(\N,\C),\quad (a_n)_n\mapsto (\tau_n(a_n))_n.\]
}
Note that $\tau(J)\subset c_0(\N,\C).$
We use the map $L_{\tau}$  from Definition~\ref{def:LLtau}
to construct a push-out of the extension \eqref{eq:000} to a central extension.
 The group $P_1/P_0$ has a $P/P_0$  action given by
 $w\cdot u=wuw^{-1}$ with $w\in P/P_0$ and $u\in P_1/P_0$.
We view  the group 
$$Q(\C):=c_0(\N,\C)/{c_{00}(\N,\C)}$$ 
 as a trivial $P/P_0$-module.
\begin{lemma} The map  $L_{\tau}:\{u\in \GL^0(A):\|u-1\|<1\}\to \ell^\infty(\N,\C)$ from Definition~\ref{def:LLtau} induces
	 a $P/P_0$-equivariant group homomorphism $L: {P_{1}}/{P_{0}} \rightarrow Q(\C)$ such that 
 \begin{equation}\label{eq:defL}  L\left(\left(u_{n}\right)_{n}\right)=\left(\frac{1}{2 \pi i}  \tau_{n}\left(\log u_{n}\right)\right)_{n}.
 \end{equation} 
  
\end{lemma}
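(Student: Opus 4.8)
The plan is to show that $L_\tau$ is well-defined on $P_1/P_0$, lands in $Q(\C)$, is a homomorphism, and is equivariant under conjugation — treating each of these in turn and reducing everything to the properties of $\tDelta_\tau$ and the local homomorphism property of $L_\tau$ established in Proposition~\ref{prop-L}.

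First I would address well-definedness and the target. Given $(u_n)_n \in P_1$, we have $\|u_n - 1_n\| \to 0$, so for $n$ large enough $\|u_n - 1_n\| < 1$ and $L_{\tau_n}(u_n) = \frac{1}{2\pi i}\tau_n(\log u_n)$ makes sense; for the finitely many small $n$ where this fails we simply set the coordinate to be anything (say $0$), since we are working modulo $c_{00}$. The bound $\|\log u_n\| \le 2\|u_n - 1_n\|$ together with $\sup_n \|\tau_n\| < \infty$ gives $|\tau_n(\log u_n)| \le 2\|\tau_n\|\,\|u_n - 1_n\| \to 0$, so the sequence $\bigl(\tfrac{1}{2\pi i}\tau_n(\log u_n)\bigr)_n$ lies in $c_0(\N,\C)$, and its class in $Q(\C) = c_0(\N,\C)/c_{00}(\N,\C)$ is independent of the finitely many unspecified coordinates. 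If $(u_n)_n$ and $(u_n')_n$ represent the same element of $P_1/P_0$, they agree for all large $n$, so their images agree in $c_0$ up to finitely many coordinates, hence are equal in $Q(\C)$. Thus $L$ is a well-defined map $P_1/P_0 \to Q(\C)$ satisfying \eqref{eq:defL}.

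Next, the homomorphism property. For $(u_n)_n, (v_n)_n \in P_1$ we have $\|u_n - 1_n\| < 1/4$ and $\|v_n - 1_n\| < 1/4$ for all sufficiently large $n$, and also $\|u_n v_n - 1_n\| < 1$ for large $n$; for such $n$, Proposition~\ref{prop-L}(1) gives $L_{\tau_n}(u_n v_n) = L_{\tau_n}(u_n) + L_{\tau_n}(v_n)$. Passing to classes in $Q(\C)$, where only the large-$n$ behavior matters, yields $L((u_nv_n)_n) = L((u_n)_n) + L((v_n)_n)$. Finally, equivariance: here $P/P_0$ acts trivially on $Q(\C)$, so I must check $L(w u w^{-1}) = L(u)$ for $w = (w_n)_n \in P$ and $u = (u_n)_n \in P_1$. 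Since $\|w_n u_n w_n^{-1} - 1_n\| = \|w_n(u_n - 1_n)w_n^{-1}\| \le \|w_n\|\,\|w_n^{-1}\|\,\|u_n - 1_n\| \to 0$, the conjugate again lies in $P_1$, and for large $n$ — where $\|u_n - 1_n\| < 1$ — Proposition~\ref{prop-L}(2) gives $L_{\tau_n}(w_n u_n w_n^{-1}) = L_{\tau_n}(u_n)$. Passing to $Q(\C)$ again discards the finitely many bad indices, so $L$ is $P/P_0$-equivariant.

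The only genuinely delicate point — and the one I would be most careful about — is the bookkeeping of "finitely many unspecified coordinates": one must be sure that every estimate used (the $c_0$-membership, the applicability of Proposition~\ref{prop-L}(1)--(2), the conjugation estimate) holds \emph{eventually} in $n$, so that quotienting by $c_{00}$ legitimately erases all the exceptional behavior. Since $\|u_n - 1_n\| \to 0$ for any representative of an element of $P_1/P_0$, each of these thresholds ($1/4$, $1/2$, $1$) is met for all large $n$, and the uniform trace bound $\sup_n\|\tau_n\| < \infty$ is exactly what converts norm-smallness into smallness of the scalars $\tau_n(\log u_n)$; so this obstacle is handled once one simply phrases all three verifications as statements about the tail of the sequence.
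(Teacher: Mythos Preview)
Your proposal is correct and follows essentially the same approach as the paper: both arguments reduce everything to the logarithm bound $|\tau_n(\log u_n)|\le 2\|\tau_n\|\,\|u_n-1_n\|$ for landing in $c_0$, and to Proposition~\ref{prop-L}(1)--(2) for the homomorphism and equivariance properties. The only cosmetic difference is that the paper phrases well-definedness by choosing within each coset a representative $u$ with $\|u-1\|<\varepsilon$ (i.e.\ replacing finitely many coordinates by $1_n$), whereas you work with an arbitrary representative and explicitly invoke the $c_{00}$-quotient to discard the finitely many bad indices; these are two ways of saying the same thing.
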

\begin{proof} Note that for any $\ep>0,$ each coset   $[u]\in \GL(J)/\GL(J_0)$ contains elements $u\in \GL^0(J)$ with $\|u-1\|<\ep.$ Define $L[u]:=L_\tau(u)+c_{00}(\N,\C)\in Q(\C).$
$L_\tau(u)\in c_0(\N,\C)$ since 
$|\tau_n\left(\log u_{n}\right)|\leq 2\|\tau_n\|\|u_n-1\|$ 
if $\|u_n-1\|<1/2$.
Proposition~\ref{prop-L}, (1)-(2) and the functorial properties of $\tDelta_\tau$ imply the desired properties. 
\end{proof}

Using $L,$ we consider the push-out of the extension \eqref{eq:000} described by the diagram: 
\[
\begin{tikzcd}
	1 \ar[r]&	P_1/P_0\ar[r] \ar[d,"L"]& P/P_0\ar["\bar{L}", d] \ar[r]& P/P_1 \ar[r] & 1\\
	1 \ar[r]&	Q(\C )\ar[r]& E \ar["\nu", r]&  P/P_1\ar[r] \ar[equal]{u}\ar[r] & 1
\end{tikzcd}
\]
Here $E\cong (Q(\C)\times P/P_0)/\{(-L(u),u):u\in P_1/P_0\},$ $\bar{L}(w)=(0,w)$ and $\nu(r,w)=w$.

Since $Q(\C)$ is a trivial $P/P_0$-module,  it is also a trivial $E$-module, so that the push-out is central extension.
\begin{definition}
For a sequence of  tracial Banach algebras $(A_n,\tau_n)$ with $\sup_n\|\tau_n\|<\infty$,
\begin{equation}\label{eq:003}
	\begin{tikzcd}
		1 \ar[r]&	Q(\C)  \ar[r]& E \ar[r]&  P/P_1\ar[r] \ar[r] & 1
	\end{tikzcd}
\end{equation} where $P/P_1\cong GL(\ell^\infty(\N,A_n)/c_0(\N,A_n)),$ 
is the central extension canonically associated to $(A_n,\tau_n)$.   The corresponding 2-cohomology class is denoted by $[E]\in H^2(P/P_1,Q(\C))$.
\end{definition}
\begin{remark}\label{eq:asymptotic} Let $\Gamma$ be a discrete group. By general Banach algebra theory one verifies that the following conditions are equivalent
for a sequence of unital maps $(\rho_n)_n$: 
\[\rho_{n}: \Gamma \rightarrow G L\left(A_{n}\right).\]
    \begin{itemize}
        \item[(1)]  $\lim _{n \rightarrow \infty}\left\|\rho_{n}(a) \rho_{n}(b)-\rho_n(ab)\right\|=0 \, \text{and} \, \sup_n||\rho_n(a)||<\infty,\, a,b \in \G.$
        
         \item[(2)] $ \lim _{n \rightarrow \infty}\left\|\rho_{n}(a) \rho_{n}(b) \rho_{n}(a b)^{-1}-1_{n}\right\|=0 \, \text{and} \, \sup_n||\rho_n(a)||<\infty,\, a,b \in \G.$

         \item[(3)] $\lim _{n \rightarrow \infty}\left\|\rho_{n}(a) \rho_{n}(b)-\rho_n(ab)\right\|=0 \,\text{and} \, \sup_n(||\rho_n(a)||+||\rho_n(a)^{-1}||)<\infty,\, a,b \in \G.$ 
    \end{itemize}
    \end{remark}
Thus,
we can view the sequence $(\rho_n)_n$  as a unital map $\rho: \Gamma \rightarrow P$ with the property that the composition 
$$
\Gamma \xrightarrow{\rho} P \xrightarrow{\nu} P / P_{1}
$$
defines  a group homomorphism $\dot{\rho}=\nu \circ \rho:\G \to P/P_1$.
We construct the pull-back of the extension  \eqref{eq:003} as described in diagram below,
\[\begin{tikzcd}
	1 & Q(\C) && E  && P/P_1  & 1 \\
	1 & Q(\C) \ar[equal]{u} && E_\rho && \Gamma & 1
	\arrow[from=1-1, to=1-2]
	\arrow[from=1-2, to=1-4]
	\arrow["\nu", from=1-4, to=1-6]
	\arrow[from=1-6, to=1-7]
	\arrow[from=2-1, to=2-2]
	%\arrow[from=2-2, to=1-2]
	\arrow[from=2-2, to=2-4]
	\arrow[ from=2-4, to=1-4]
	\arrow["\nu'",from=2-4, to=2-6]
	\arrow["\dot{\rho}",swap, from=2-6, to=1-6]
	\arrow[from=2-6, to=2-7]
\end{tikzcd}\]
 where
$
E_{\rho}=\left\{(e, a) \in E \times \Gamma: \nu(e)=\dot{\rho}(a)\right\}
$
and  $\nu^{\prime}(e, a)=a$.
The isomorphism class of the central extension
\begin{equation}\label{eq:002}
	1 \rightarrow Q(\C) \rightarrow E_{\rho} \rightarrow \Gamma \rightarrow 1
\end{equation}
depends solely on $\dot{\rho}$. {From this, it follows that if $(\rho_n)_n$ and $(\varphi_n)_n$ are two  sequences of unital maps as in Remark~\ref{eq:asymptotic} with the additional condition that $||\rho_n(a)\varphi_n(a)^{-1}-1||\rightarrow0$, 
then $\dot{\rho}=\dot{\varphi}$ and hence  the extension associated to each sequence is the same}. In particular, 
observe that if there is a sequence of representations $\pi_{n}: \Gamma \rightarrow G L\left(A_{n}\right)$ such that
\begin{equation}\label{eq:001}
  \left\|\rho_{n}(a) \pi_{n}(a)^{-1}-1\right\| \rightarrow 0, \quad \text{for all}\,\, a \in \G,
\end{equation}
this will induce a group homomorphism
$\pi: \Gamma \rightarrow P / P_{0}\stackrel{\bar{L}}{\longrightarrow} E,$ which in turn will produce a splitting $s:\Gamma \to E_\rho$ of the  extension \eqref{eq:002},
given by $s(a)=(\pi(a), a)$, $a\in \G$. Note that $s(a)\in E_{\rho}$ since
$\rho(a) \pi(a)^{-1} \in P_{1}/P_0$ by condition \eqref{eq:001}.

It follows by construction, that the central extension \eqref{eq:002} is associated to the 2-cocycle
\begin{equation}\label{eq:2-cocycle}
\omega(a,b)=(\omega_n(a,b))_n=\left(\frac{1}{2 \pi i}  \tau_{n}\left(\log \left(\rho_n(a)\rho_n(b)\rho_n(ab)^{-1}\right)\right)\right)=L(\rho(a)\rho(b)\rho(ab)^{-1}).
\end{equation}

\begin{notation}
    The 2-cohomology class of the extension \eqref{eq:002} is the class of 2-cocycle $\omega$  and is denoted by $[{\rho}]\in H^2(\G,Q(\C))$. Evidently, we have $[{\rho}]=\dot{\rho}^*[E].$
\end{notation}

By Remark~\ref{eq:asymptotic},  condition \eqref{eq:001} is equivalent to 
$\,\, \left\|\rho_{n}(a)- \pi_{n}(a)\right\| \rightarrow 0, \,\, a \in \G.$
The above discussion proves the following theorem:
\begin{theorem}\label{thm:basic}
Let $(A_n,\tau_n)$ be a sequence of  tracial Banach algebras  with $\sup_n\|\tau_n\|<\infty.$ Any sequence of unital maps $\rho_n:\G \to \GL(A_n)$
  satisfying $||\rho_{n}(a) \rho_{n}(b) -\rho_{n}(a b)||\rightarrow 0$ and 
  $\sup_n||\rho_n(a)||<\infty$ for all $a,b\in\G$,  defines a 2-cocycle $\omega \in Z^2(\G,Q(\C))$ given by equation ~\eqref{eq:2-cocycle}.
If there exists a sequence of group homomorphisms $\pi_n:\G\rightarrow\Gl(A_n)$ so that $||\rho_{n}(a)- \pi_{n}(a)||\rightarrow 0,$ then $\omega$ is cohomologous to 0, i.e. $[\rho]=0$ in $H^2(\G,Q(\C))$.
\end{theorem}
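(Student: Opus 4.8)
The plan is to observe that this theorem simply records the output of the push-out/pull-back construction carried out just above it, so that the proof amounts to assembling pieces already in place. Concretely, I would first check that $\omega$ takes values in $Q(\C)$ and is normalized, then identify $\omega$ with the canonical $2$-cocycle of the central extension \eqref{eq:002}, and finally build a splitting of that extension out of the hypothetical homomorphisms $\pi_n$. For the first point: viewing $(\rho_n)_n$ as a unital map $\rho\colon\G\to P$ is legitimate by Remark~\ref{eq:asymptotic} (the hypotheses are condition (1) there, hence also (2) and (3)), and condition (2) gives $\|\rho_n(a)\rho_n(b)\rho_n(ab)^{-1}-1_n\|\to 0$, so $\rho(a)\rho(b)\rho(ab)^{-1}$ lies in $P_1$; its class in $P_1/P_0$ is therefore in the domain of $L$, and $\omega(a,b)=L(\rho(a)\rho(b)\rho(ab)^{-1})\in Q(\C)$ is well defined. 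Since $\rho_n(e)=1_n$ we get $\omega(a,e)=\omega(e,a)=L(1)=0$, so $\omega$ is normalized.

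For the identification, I would use the set-theoretic section $\gamma(a)=(\bar L(\overline{\rho(a)}),a)$ of $\nu'\colon E_\rho\to\G$, where $\overline{\rho(a)}$ is the class of $\rho(a)$ in $P/P_0$; this lands in $E_\rho$ because the right-hand square of the push-out diagram commutes, and it is unital since $\overline{\rho(e)}$ is trivial. As $\bar L\colon P/P_0\to E$ is a group homomorphism while $a\mapsto\overline{\rho(a)}$ is merely set-theoretic, one computes (componentwise in $E\times\G$) that $\gamma(a)\gamma(b)\gamma(ab)^{-1}=\bigl(\bar L\bigl(\overline{\rho(a)\rho(b)\rho(ab)^{-1}}\bigr),e\bigr)$. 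From the explicit model $E\cong(Q(\C)\times P/P_0)/\{(-L(u),u):u\in P_1/P_0\}$ one reads off that $\bar L(u)=j(L(u))$ for every $u\in P_1/P_0$ (the classes of $(0,u)$ and $(L(u),1)$ differ by $(-L(u),u)$). Applying this with $u=\overline{\rho(a)\rho(b)\rho(ab)^{-1}}\in P_1/P_0$ gives $\gamma(a)\gamma(b)\gamma(ab)^{-1}=j(\omega(a,b))$, so $\omega$ is exactly the $2$-cocycle attached to $E_\rho$ by the section $\gamma$; in particular it satisfies the cocycle identity \eqref{eq:cocyle}, lies in $Z^2(\G,Q(\C))$, and represents $[\rho]=\dot\rho^*[E]$.

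For the last step, assume homomorphisms $\pi_n\colon\G\to\GL(A_n)$ exist with $\|\rho_n(a)-\pi_n(a)\|\to 0$. From $\|\pi_n(a)\|\le\|\rho_n(a)\|+\|\rho_n(a)-\pi_n(a)\|$, the identity $\pi_n(a)^{-1}=\pi_n(a^{-1})$, and the uniform bounds on $\|\rho_n(a^{\pm1})\|$ from Remark~\ref{eq:asymptotic}(3), the sequence $(\pi_n)_n$ is uniformly bounded with uniformly bounded inverses, hence assembles into a genuine group homomorphism $\pi\colon\G\to P$; composing with $P\to P/P_0$ and $\bar L$ yields a homomorphism $\tilde\pi\colon\G\to E$. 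I would then verify that $s(a):=(\tilde\pi(a),a)$ is a group-theoretic splitting of \eqref{eq:002}: it is a homomorphism because $\tilde\pi$ is one, it satisfies $\nu'\circ s=\id{\G}$, and $s(a)\in E_\rho$ since $\nu(\tilde\pi(a))$ is the image of $\pi(a)$ in $P/P_1$, which equals $\dot\rho(a)$ precisely because $\rho(a)\pi(a)^{-1}\in P_1$ (equivalently $\|\rho_n(a)-\pi_n(a)\|\to 0$, by Remark~\ref{eq:asymptotic}). Since a central extension that admits a splitting has trivial class, $[\rho]=[\omega]=0$ in $H^2(\G,Q(\C))$.

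There is no deep obstacle here; the argument is bookkeeping inside diagrams already constructed. The one spot that needs genuine attention is lining up the two facts in the middle step — that $a\mapsto\overline{\rho(a)}$ fails to be multiplicative exactly by the element $\overline{\rho(a)\rho(b)\rho(ab)^{-1}}\in P_1/P_0$, and that $\bar L$ restricted to $P_1/P_0$ is $j\circ L$ — so that $\omega$ is recognized as the honest cocycle of $E_\rho$ and not merely as some $Q(\C)$-valued function. A lesser subtlety is the automatic uniform boundedness of the $\pi_n$ together with their inverses, which is what lets them define a homomorphism into $P$ rather than just into $\prod_n \GL(A_n)$.
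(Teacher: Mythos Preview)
Your proposal is correct and follows exactly the approach of the paper, whose proof is simply the discussion preceding the theorem statement (push-out to build $E$, pull-back along $\dot\rho$ to get $E_\rho$, then splitting via $s(a)=(\bar L\circ\pi(a),a)$ when the $\pi_n$ exist). You have filled in a few details the paper leaves implicit---the normalization $\omega(a,e)=\omega(e,a)=0$, the explicit identification $\bar L|_{P_1/P_0}=j\circ L$ inside the push-out model, and the uniform boundedness of $\pi_n$ and $\pi_n^{-1}$ needed to land in $P$---but the architecture is identical.
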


One can investigate the nontriviality of $[\rho]$ by pairing it with 2-cycles. Note that if 
\begin{equation}\label{eq:pre}
c=\sum_{j=1}^{m}k_j [a_j|b_j]\in Z_2(\G,\Z),\end{equation}
then
\begin{equation}\label{eq:pairr}
    \langle[\rho],[c]\rangle=L\left(\prod_{j=1}^m\left(\rho(a_j)\rho(b_j)\rho(a_jb_j)^{-1}\right)^{k_j}\right).
\end{equation}
\begin{corollary}\label{cor:proj-appl}
    Let $(A_n,\tau_n)$ and $(\rho_n)$ be as in Theorem~\ref{thm:basic}.
    Assume furthermore that there is a sequence of 2-cocycles $\sigma_n\in Z^2(\G,\mathbb{R})$ such that 
    $\rho_{n}(a) \rho_{n}(b) \rho_{n}(a b)^{-1}=e^{2\pi i\sigma_n(a,b)}1_{n}$ and
    $\sigma_n(a,b)\to 0$ for all $a,b\in \G$. Then $[\rho]$ is represented by the 2-cocycle $\omega\in Z^2(\G,Q(\C))$,  
    $\omega(a,b)=\left(\sigma_n(a,b)\tau_n(1_n)\right)_n.$ In particular if
    there is $c \in Z_2(\G,\Z)$ such that for infinitely many $n$, $\langle \sigma_n,c\rangle \neq 0$ and $\tau_n(1_n)\neq 0,$  then there exists no sequence of group homomorphisms $\pi_n:\G\rightarrow\Gl(A_n)$ so that $||\rho_{n}(a)- \pi_{n}(a)||\rightarrow 0,$ for all $a\in \G$.
\end{corollary}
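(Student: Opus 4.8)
The plan is to unwind the definition of $[\rho]$ furnished by Theorem~\ref{thm:basic} and then simplify the resulting cocycle using the hypothesis that the multiplicative defect of $(\rho_n)$ is a scalar. By Theorem~\ref{thm:basic} and formula~\eqref{eq:2-cocycle}, the class $[\rho]\in H^2(\G,Q(\C))$ is represented by $\omega(a,b)=L\big(\rho(a)\rho(b)\rho(ab)^{-1}\big)$, whose $n$-th coordinate, by~\eqref{eq:defL}, equals $\frac{1}{2\pi i}\tau_n\!\big(\log(\rho_n(a)\rho_n(b)\rho_n(ab)^{-1})\big)$ for every $n$ large enough that $\|\rho_n(a)\rho_n(b)\rho_n(ab)^{-1}-1_n\|<1/2$. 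The first step is to substitute the hypothesis $\rho_n(a)\rho_n(b)\rho_n(ab)^{-1}=e^{2\pi i\sigma_n(a,b)}1_n$ into this expression.

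Next, I would observe that since $\sigma_n(a,b)\to 0$ for each fixed $a,b$ and $\sup_n\|1_n\|<\infty$ — the latter because $1_n=\rho_n(e)$ and $\sup_n\|\rho_n(e)\|<\infty$ is part of the hypothesis — for all sufficiently large $n$ (depending on $a,b$) the element $e^{2\pi i\sigma_n(a,b)}1_n$ lies in the domain of the power-series logarithm of Definition~\ref{def:LLtau}, and there $\log\!\big(e^{2\pi i\sigma_n(a,b)}1_n\big)=2\pi i\,\sigma_n(a,b)\,1_n$, since the principal branch satisfies $\log e^{i\theta}=i\theta$ for $\theta$ near $0$ and no branch jump occurs. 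Hence the $n$-th coordinate of $\omega(a,b)$ equals $\frac{1}{2\pi i}\tau_n(2\pi i\,\sigma_n(a,b)1_n)=\sigma_n(a,b)\tau_n(1_n)$ for all large $n$; because passing to $Q(\C)=c_0(\N,\C)/c_{00}(\N,\C)$ discards finitely many coordinates, this yields $\omega(a,b)=\big(\sigma_n(a,b)\tau_n(1_n)\big)_n$ in $Q(\C)$. Along the way one checks that $\big(\sigma_n(a,b)\tau_n(1_n)\big)_n\in c_0(\N,\C)$ (from $|\sigma_n(a,b)\tau_n(1_n)|\le|\sigma_n(a,b)|\,\|\tau_n\|\,\|1_n\|\to0$) and that it is a genuine $2$-cocycle valued in $Q(\C)$, each $\sigma_n$ being a cocycle and $\tau_n(1_n)$ a scalar constant in $a,b$.

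Finally, for the last assertion I would compute the Kronecker pairing. For $c=\sum_{j=1}^m k_j[a_j|b_j]\in Z_2(\G,\Z)$, combining~\eqref{eq:pairr} with the identity just established gives
\[
\langle[\rho],[c]\rangle=\sum_{j=1}^m k_j\,\omega(a_j,b_j)=\Big(\tau_n(1_n)\,\langle\sigma_n,c\rangle\Big)_n\in Q(\C).
\]
By hypothesis there are infinitely many $n$ for which $\tau_n(1_n)\neq0$ and $\langle\sigma_n,c\rangle\neq0$; for each such $n$ the $n$-th entry of this sequence is nonzero, so the sequence does not lie in $c_{00}(\N,\C)$, and therefore $\langle[\rho],[c]\rangle\neq0$ in $Q(\C)$. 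In particular $[\rho]\neq0$, and the contrapositive of Theorem~\ref{thm:basic} shows that there is no sequence of group homomorphisms $\pi_n:\G\to\GL(A_n)$ with $\|\rho_n(a)-\pi_n(a)\|\to0$ for all $a\in\G$.

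I do not anticipate any serious analytic obstacle: the statement is essentially a direct specialization of Theorem~\ref{thm:basic}. The only points requiring genuine care are bookkeeping ones — applying the ``finitely many coordinates'' ambiguity of $Q(\C)$ consistently for all pairs $a,b$, and verifying the scalar functional-calculus identity $\log(e^{2\pi i t}1_n)=2\pi i t\,1_n$ for $|t|$ small, so that the coordinatewise computation of $L$ uses the correct branch.
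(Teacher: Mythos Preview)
Your proposal is correct and follows essentially the same route as the paper: substitute the scalar hypothesis into the cocycle formula~\eqref{eq:2-cocycle}, compute $\log(e^{2\pi i\sigma_n(a,b)}1_n)=2\pi i\sigma_n(a,b)1_n$ for large $n$, obtain $\omega_n(a,b)=\sigma_n(a,b)\tau_n(1_n)$, and then pair with $c$ to see $[\rho]\neq 0$. The paper's proof is terser, but your added care about the domain of the logarithm, the passage to $Q(\C)$, and the boundedness of $\|1_n\|$ are all appropriate and do not diverge from the intended argument.
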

\begin{proof}
    By Theorem~\ref{thm:basic},
	 the class $[\rho]\in H^2(\G,Q(\C))$
	is given by a 2-cocycle $\omega$ with components 
	\[\omega_n(a,b)=\frac{1}{2 \pi i}  \tau_{n}\left(\log \left(\rho_n(a)\rho_n(b)\rho_n(ab)^{-1}\right)\right)_{n}=\frac{1}{2 \pi i}  \tau_{n}\left(\log e^{2\pi i \sigma_n(a,b)}\right)=\sigma_n(a,b)\tau(1_n).\]
	The pairing 
	$H^2(\G,Q(\C))\times H_2(\G,\Z)\to Q(\C)$ yields
	\[\langle [\rho],[c]\rangle=(\langle\sigma_n,c\rangle\tau_n(1_n))_n\neq 0,\]
 and hence $[\rho]\neq 0$ in $H^2(\G,Q(\C))$. 
\end{proof}
The class $[\rho]$ satisfies the following invariance property. 
\begin{proposition}\label{prop:invariance}
   Suppose that $\phi_n: (A_n,\tau_n) \to (A'_n,\tau'_n)$ is a sequence of contractive unital homomorphisms of tracial Banach algebras as above, such that $\tau_n'\circ 
   \phi_n=\tau_n.$ 
   Define $\rho'_n:A_n\to A_n'$ by $\rho'_n=\phi_n\circ \rho_n.$
   Then $[\rho']=[\rho]\in H^2(\G,Q(\C)).$
   \end{proposition}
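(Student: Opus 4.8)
The plan is to compare the $2$-cocycles representing $[\rho]$ and $[\rho']$ and to show that they coincide. By Theorem~\ref{thm:basic} and formula~\eqref{eq:2-cocycle}, $[\rho]$ is represented by $\omega\in Z^2(\G,Q(\C))$ with components $\omega_n(a,b)=\frac{1}{2\pi i}\tau_n\big(\log(\rho_n(a)\rho_n(b)\rho_n(ab)^{-1})\big)$, and similarly $[\rho']$ is represented by $\omega'$ with components $\omega'_n(a,b)=\frac{1}{2\pi i}\tau'_n\big(\log(\rho'_n(a)\rho'_n(b)\rho'_n(ab)^{-1})\big)$. First I would verify that $(\rho'_n)$ satisfies the hypotheses of Theorem~\ref{thm:basic}: since each $\phi_n$ is contractive and multiplicative, $\|\rho'_n(a)\rho'_n(b)-\rho'_n(ab)\|=\|\phi_n(\rho_n(a)\rho_n(b)-\rho_n(ab))\|\le\|\rho_n(a)\rho_n(b)-\rho_n(ab)\|\to0$ and $\sup_n\|\rho'_n(a)\|\le\sup_n\|\rho_n(a)\|<\infty$, so $[\rho']$ is well defined; it then remains to prove $\omega=\omega'$.

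The core is an elementwise computation. Fix $a,b\in\G$ and set $u_n:=\rho_n(a)\rho_n(b)\rho_n(ab)^{-1}$. Since $\phi_n$ is a unital homomorphism it carries invertibles to invertibles with $\phi_n(u_n^{-1})=\phi_n(u_n)^{-1}$, so $\rho'_n(a)\rho'_n(b)\rho'_n(ab)^{-1}=\phi_n(u_n)$. As $(\rho_n)$ is an asymptotic homomorphism, $\|u_n-1_n\|\to0$, so for all but finitely many $n$ we have $\|u_n-1_n\|<1/2$ and, by contractivity, $\|\phi_n(u_n-1_n)\|\le\|u_n-1_n\|<1/2$, i.e.\ $\phi_n(u_n)$ lies within distance $1/2$ of the unit of $A'_n$. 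For such $n$ the power series defining $\log$ converges at both $u_n$ and $\phi_n(u_n)$, and continuity together with multiplicativity of $\phi_n$ gives $\log\phi_n(u_n)=\phi_n(\log u_n)$; applying $\tau'_n$ and using the hypothesis $\tau'_n\circ\phi_n=\tau_n$ yields
\[\omega'_n(a,b)=\frac{1}{2\pi i}\,\tau'_n\big(\phi_n(\log u_n)\big)=\frac{1}{2\pi i}\,\tau_n(\log u_n)=\omega_n(a,b)\]
for all large $n$. Hence $(\omega_n(a,b))_n$ and $(\omega'_n(a,b))_n$ agree outside a finite set and therefore represent the same class in $Q(\C)=c_0(\N,\C)/c_{00}(\N,\C)$. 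Since $a,b\in\G$ were arbitrary, $\omega=\omega'$ as maps $\G^2\to Q(\C)$, and so $[\rho']=[\rho]$.

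I do not expect a serious obstacle. The only point requiring attention is the identity $\log\phi_n(u_n)=\phi_n(\log u_n)$, which holds because $\log$ is a norm-convergent power series on $\{x:\|x-1\|<1\}$, $\phi_n$ is a continuous algebra homomorphism, and the hypotheses force both $u_n$ and $\phi_n(u_n)$ into that region for all large $n$. A more conceptual route one could present instead: note that $\phi=(\phi_n)_n$ maps the ideals $J=c_0(\N,A_n)$ and $J_0=c_{00}(\N,A_n)$ into their analogues for $(A'_n)$, that $\tau'\circ\phi=\tau$ makes $\phi$ compatible with the homomorphisms $L$ and $L'$, and hence that $\phi$ induces a morphism of the two push-out diagrams, i.e.\ a map $E\to E'$ over the induced homomorphism $\theta:P/P_1\to P'/P'_1$ and over the identity of $Q(\C)$. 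Since $\dot{\rho}'=\theta\circ\dot{\rho}$, naturality of pullback gives $[\rho']=(\dot{\rho}')^*[E']=\dot{\rho}^*\theta^*[E']=\dot{\rho}^*[E]=[\rho]$.
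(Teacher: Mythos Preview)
Your proof is correct and follows essentially the same approach as the paper's: both show $\omega=\omega'$ componentwise via $\log\phi_n(u_n)=\phi_n(\log u_n)$ (the paper phrases this as ``functoriality of the holomorphic calculus'') and then apply $\tau'_n\circ\phi_n=\tau_n$ together with~\eqref{eq:2-cocycle}. You spell out more details than the paper does---in particular the verification that $(\rho'_n)$ satisfies the hypotheses of Theorem~\ref{thm:basic}---and your alternative push-out argument is a nice conceptual complement, but the core is the same.
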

   In particular, if we establish that the sequence $(\rho_n)_n$ is not perturbable to a sequence of homomorphisms by verifying that $[\rho]\neq 0$,  then the same property holds for the sequence $(\rho'_n)_n$ even though $A_n'$ might be significantly larger then $A_n$, for example a finite von Neumann algebra completion of $A_n,$ or significantly smaller than $A_n$, for example a finite dimensional quotient of $A_n$.
\begin{proof}
    By functoriality of the  holomorphic calculus, since $\rho'_n=\phi_n\circ \rho_n,$ we have
    \[\tau'_{n}\left(\log \left(\rho'_n(a)\rho'_n(b)\rho'_n(ab)^{-1}\right)\right)=(\tau'_{n}\circ \phi_n)\left(\log \left(\rho_n(a)\rho_n(b)\rho_n(ab)^{-1}\right)\right).\]
    The desired conclusion follows now from \eqref{eq:2-cocycle}.
\end{proof}

\section{K-theory}\label{sec:4}
In  this section we connect the  2-cohomology class $[\rho]$ to K-theory invariants, see Proposition~\ref{prop:integral} and Corollary~\ref{index}.
These properties are not used for the non-stability results in the next two sections.

Let $\rho_{n}: \Gamma \rightarrow U\left(A_{n}\right)$ be
   an asymptotically multiplicative sequence of unital maps that satisfies the conditions from  Remark~\ref{eq:asymptotic}, where $A_n$ are tracial unital $C^*$-algebras. 

 Hopf's formula expresses the second homology of \(\Gamma\) as 
\[H_2(\Gamma,\mathbb{Z})=\frac{R\cap [F,F]}{[R,F]}\] 
in terms of a free presentation 
\(1 \to R \to F {\longrightarrow} \Gamma \to 1.\) 
 Each element \(x \in H_2(\Gamma,\mathbb{Z})\) can be represented by a product of commutators \(\prod_{i=1}^{g} [a_i,b_i]\) with \(a_i,b_i \in F\), for some integer \(g \geq 1\), such that \(\prod_{i=1}^{g} [\bar{a}_i,\bar{b}_i] = 1\), where $\bar a_i$ and $\bar b_i$ are the images in $\Gamma$ of ${a}_i$ and ${b}_i$.

There is an isomorphism $\varphi:\frac{R\cap [ F,F]}{[R,F]}\to H_2(\Gamma,\mathbb{Z})$  from the Hopf's realization of $H_2(\Gamma,\mathbb{Z})$ to the the same group defined via the bar-resolution. 
 By \cite[chapter II.5 Exercise 4]{iBrown:book-cohomology}, 
 if $r\in H_2(\G,\Z)$ is represented by $\prod_{i=1}^g[a_i,b_i]$ in the Hopf formula,
then a 2-cycle representative for the class of $\varphi(r)$ is the element $\sum_{i=1}^g d_i$,  where 
\begin{equation}\label{eq:hoppy}
    d_i=[I_{i-1}|\bar a_i]+[I_{i-1}\bar a_i|\bar b_i]-[I_{i-1}\bar a_i\bar b_i\bar a_i^{-1}|\bar a_i]-[I_i|\bar b_i]
\end{equation}
and $I_i=[\bar a_1,\bar b_1]\cdots[\bar a_i,\bar b_i]$. 

\begin{proposition}\label{prop:Hopf}
  Let  $r\in H_2(\G,\Z)$ be represented by $\prod_{i=1}^g[a_i,b_i]$ in the Hopf formula. Then
  \begin{equation}\label{eq:hoppyy}
      \langle[\rho],[\varphi(r)]\rangle=L\left(\prod_{i=1}^{g} [\rho(\bar{a}_i),\rho(\bar{b}_i)]\right)
  \end{equation}
\end{proposition}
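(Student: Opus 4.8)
The plan is to feed the explicit $2$-cycle representative of $\varphi(r)$ from \eqref{eq:hoppy} into the pairing formula \eqref{eq:pairr}, and then to recognize the resulting product of coboundary defects, by means of the push-out homomorphism $\bar L\colon P/P_0\to E$, as a product of commutators of $\rho$. Throughout, write $q\colon P\to P/P_0$ for the quotient homomorphism and $\bar\rho=q\circ\rho\colon\Gamma\to P/P_0$. I shall use that $\bar L$ is a group homomorphism restricting to $L$ on $P_1/P_0$, and that $\dot\rho=\nu\circ\bar\rho\colon\Gamma\to P/P_1$ is a group homomorphism, whereas $\rho$ and $\bar\rho$ are merely unital maps.

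Set $\delta(x,y)=\rho(x)\rho(y)\rho(xy)^{-1}\in P$; since $(\rho_n)$ is asymptotically multiplicative, $q(\delta(x,y))\in P_1/P_0$, and $\omega(x,y)=L\big(q(\delta(x,y))\big)$ by \eqref{eq:2-cocycle}. By \eqref{eq:hoppy}, $\varphi(r)$ is represented by the $2$-cycle $\sum_{i=1}^{g}d_i$ with $I_i=[\bar a_1,\bar b_1]\cdots[\bar a_i,\bar b_i]$ and $I_0=I_g=1$. Substituting this cycle into \eqref{eq:pairr} and using that $L|_{P_1/P_0}$ is a homomorphism into the abelian group $Q(\C)$ (so that the factors inside $L$ may be regrouped and permuted freely), we get
\[
\langle[\rho],[\varphi(r)]\rangle
= L\!\left(\,\prod_{i=1}^{g}\delta(I_{i-1},\bar a_i)\,\delta(I_{i-1}\bar a_i,\bar b_i)\,\delta(I_{i-1}\bar a_i\bar b_i\bar a_i^{-1},\bar a_i)^{-1}\,\delta(I_i,\bar b_i)^{-1}\right),
\]
with each $\delta$-factor read as its class in $P_1/P_0$.

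Next apply $\bar L$, which coincides with $L$ on the argument above. Put $\psi=\bar L\circ\bar\rho\colon\Gamma\to E$; this is a set-theoretic lift of $\dot\rho$ with $\psi(x)\psi(y)\psi(xy)^{-1}=\bar L\big(q(\delta(x,y))\big)=\omega(x,y)$, a central element of $E$. Rewriting each factor $\bar L\big(q(\delta(x,y))\big)$ as $\psi(x)\psi(y)\psi(xy)^{-1}$ and using that all the arising error terms $\omega(\,\cdot\,,\cdot\,)$ are central in $E$, the product above telescopes to $\prod_{i=1}^{g}[\psi(\bar a_i),\psi(\bar b_i)]$; this is exactly the bookkeeping underlying \cite[Chapter~II.5, Exercise~4]{iBrown:book-cohomology} and the formula \eqref{eq:hoppy}. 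Since $\bar L$ and $q$ are homomorphisms, $[\psi(\bar a_i),\psi(\bar b_i)]=\bar L\big(q\big([\rho(\bar a_i),\rho(\bar b_i)]\big)\big)$, so the product equals $\bar L\big(q\big(\prod_{i=1}^{g}[\rho(\bar a_i),\rho(\bar b_i)]\big)\big)$. Finally, $\prod_{i=1}^{g}[\rho(\bar a_i),\rho(\bar b_i)]\in P$ has image $\prod_i[\dot\rho(\bar a_i),\dot\rho(\bar b_i)]=\dot\rho\big(\prod_i[\bar a_i,\bar b_i]\big)=\dot\rho(1)=1$ in $P/P_1$, because $\dot\rho$ is a homomorphism and $\prod_i[\bar a_i,\bar b_i]=1$ in $\Gamma$ by the Hopf condition; hence its class lies in $P_1/P_0$, where $\bar L$ equals $L$. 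Chaining the displays yields $\langle[\rho],[\varphi(r)]\rangle=L\big(\prod_{i=1}^{g}[\rho(\bar a_i),\rho(\bar b_i)]\big)$, which is \eqref{eq:hoppyy}.

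The main obstacle is the telescoping in the previous paragraph: one must check that, once transported into $E$, the signed product of coboundary defects coming from the four terms of each $d_i$ collapses to the single commutator $[\psi(\bar a_i),\psi(\bar b_i)]$, and that consecutive blocks (indexed by $i$) cancel correctly along the partial products $I_i$. This is routine given the cocycle relation $\psi(x)\psi(y)=\omega(x,y)\psi(xy)$ and the centrality of $Q(\C)$ in $E$, and it is in substance the computation already recorded in \cite[Chapter~II.5, Exercise~4]{iBrown:book-cohomology}. Alternatively, one could avoid \eqref{eq:hoppy} and \eqref{eq:pairr} and invoke directly the standard fact that the Kronecker pairing of a central extension class with a Hopf $2$-cycle $\prod_i[a_i,b_i]$ equals $\prod_i[\tilde a_i,\tilde b_i]$ for arbitrary lifts $\tilde a_i,\tilde b_i$ to the total space, applied to $E_\rho$ with the lifts $\big(\bar L(\bar\rho(\bar a_i)),\bar a_i\big)\in E_\rho$.
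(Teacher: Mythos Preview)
Your proposal is correct and follows the same overall strategy as the paper: feed the explicit bar $2$-cycle $\sum_i d_i$ from \eqref{eq:hoppy} into the pairing and telescope. The packaging differs slightly. The paper stays entirely inside $P_1/P_0$ and uses only the elementary properties of $L$ from Proposition~\ref{prop-L} (additivity and $L(u^{-1})=-L(u)$): it writes $\langle[\rho],[d_i]\rangle$ as a sum of four $L$-terms, combines them into $L$ of a single product in a well-chosen order, and simplifies algebraically to $L\big(\rho(I_{i-1})[\rho(\bar a_i),\rho(\bar b_i)]\rho(I_i)^{-1}\big)$; summing over $i$ then telescopes via $\rho(I_{i-1})^{-1}\rho(I_{i-1})$ cancellations. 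You instead push everything into $E$ via $\bar L$, exploit centrality of $Q(\C)$ there, and appeal to the general fact that evaluating a section on a Hopf relator returns the commutator product of lifts. Your route is more conceptual and explains \emph{why} the telescoping works, but you defer the actual cancellation to ``routine bookkeeping'' and a citation, whereas the paper carries it out in three displayed lines. Both are fine; if you want your write-up to be self-contained, it would be worth writing out at least the single-block simplification $\prod_{\pm}\omega(\cdot,\cdot)^{\pm 1}=\psi(I_{i-1})[\psi(\bar a_i),\psi(\bar b_i)]\psi(I_i)^{-1}$ explicitly, using the identity $I_{i-1}\bar a_i\bar b_i\bar a_i^{-1}=I_i\bar b_i$.
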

\begin{proof}
    Let $[\varphi(r)]=\sum_{i=1}^g [d_i]$ with $d_i$ as in \eqref{eq:hoppy}. We compute
\begin{align*}
\langle[\rho],[d_i]\rangle=&L(\rho(I_{i-1})\rho(\bar a_i)\rho(I_{i-1}\bar a_i)^{-1})
+L(\rho(I_{i-1}\bar a_i)\rho(\bar b_i)\rho(I_{i-1}\bar a_i\bar b_i)^{-1})\\
&+L(\rho(I_{i-1}\bar a_i\bar b_i)\rho(\bar a_i)^{-1}\rho(I_{i-1}\bar a_i\bar b_i\bar a_i^{-1})^{-1})+L(\rho(I_i\bar b_i)\rho(\bar b_i)^{-1}\rho(I_i)^{-1})\\
=&L(\rho(I_{i-1})\rho(\bar a_i)\rho(\bar b_i)\rho(\bar a_i)^{-1}\rho(I_i{\bar b_i})^{-1}\rho(I_i\bar b_i)\rho(\bar b_i)^{-1}\rho(I_i)^{-1}) 
\\
=&L(\rho(I_{i-1})[\rho(\bar a_i),\rho(\bar b_i)]\rho(I_i)^{-1}).
\end{align*}
Consequently,
\[
\langle[\rho],[\varphi(r)]\rangle=\sum_{i=1}^g \,L(\rho(I_{i-1})[\rho(\bar a_i),\rho(\bar b_i)]\rho(I_i)^{-1})=L\left(\rho(I_1)\Big(\prod_{i=1}^g[\rho(\bar a_i),\rho(\bar b_i)]\Big)\rho(I_g)^{-1}\right)
.\]
Since $I_g=I_1=1$,  we obtain the desired conclusion.
\end{proof}
Consider the groups
\[\ell^\infty(\N,\tau_{n*}\left(K_0(A_n)\right)=\{(x_n)_n\in \ell^\infty(\N,\C): x_n\in \tau_{n*}\left(K_0(A_n)\right),\,\, \forall n\in \N\}. \]
\[c_0(\N,\tau_{n*}\left(K_0(A_n)\right)=\{(x_n)_n\in c_0(\N,\C): x_n\in \tau_{n*}\left(K_0(A_n)\right),\,\, \forall n\in \N\}. \]
\[c_{00}(\N,\tau_{n*}\left(K_0(A_n)\right)=\{(x_n)_n\in c_{00}(\N,\C): x_n\in \tau_{n*}\left(K_0(A_n)\right),\,\, \forall n\in \N\}. \]

Consider the following subgroup $H$ of $Q(\C)=c_0(\N,\C)/c_{00}(\N,\C)$  and the corresponding quotient group
 \[\quad H:=\frac{c_0(\N,\tau_{n*}\left(K_0(A_n)\right)}{c_{00}(\N,\tau_{n*}\left(K_0(A_n)\right)},\quad
  Q(\C)/H \subset \frac{\ell^\infty(\N,\C/\tau_{n*}\left(K_0(A_n)\right)}{c_{00}(\N,\C/\tau_{n*}\left(K_0(A_n)\right)}. \]
  Let $q:Q(\C)\to Q(\C)/H$ be the quotient map. Equation~\eqref{eq:defL} shows that the components of the map $\Delta:=q\circ L:P_1/P_0 \to Q(\C)/H$ can be expressed as:
  \[\Delta(\mathbf{u})=\left(\Delta_{\tau_n}(u_n)\right)_n\]
\begin{proposition} \label{prop:integral} 
 If $c\in Z_2(\G,\Z),$ then $\langle[\rho],[c]\rangle\in H$.
 \end{proposition}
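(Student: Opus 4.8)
The plan is to push everything through the quotient map $q\colon Q(\C)\to Q(\C)/H$ and to show that the image of $\langle[\rho],[c]\rangle$ is $0$, i.e. that $\langle[\rho],[c]\rangle\in\ker q=H$. Write $c=\sum_{j=1}^m k_j[a_j|b_j]$ with $k_j\in\Z$, and set $w_j:=\rho(a_j)\rho(b_j)\rho(a_jb_j)^{-1}$; this lies in $P_1/P_0$ because each sequence $\big(\rho_n(a_j)\rho_n(b_j)\rho_n(a_jb_j)^{-1}\big)_n$ tends to $(1_n)_n$ by asymptotic multiplicativity. By the pairing formula~\eqref{eq:pairr}, $\langle[\rho],[c]\rangle=L\big(\prod_{j=1}^m w_j^{k_j}\big)$. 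Applying $q$ and using that $\Delta=q\circ L\colon P_1/P_0\to Q(\C)/H$ is a group homomorphism with components $\Delta_{\tau_n}$, one reduces the statement to showing that $\Delta_{\tau_n}(y_n)=0$ in $\C/\tau_{n*}(K_0(A_n))$ for every $n$, where $y_n:=\prod_{j=1}^m\big(\rho_n(a_j)\rho_n(b_j)\rho_n(a_jb_j)^{-1}\big)^{k_j}\in\GL(A_n)$.

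The main point is that the $2$-cycle condition forces $y_n$ into the commutator subgroup of $\GL_\infty(A_n)$. Let $\chi_n\colon\GL_\infty(A_n)\to\GL_\infty(A_n)^{\mathrm{ab}}$ be the abelianization homomorphism; since its target is abelian, $[g]\mapsto\chi_n(\rho_n(g))$ extends $\Z$-linearly to a homomorphism $\psi_n\colon C_1(\G,\Z)\to\GL_\infty(A_n)^{\mathrm{ab}}$. As $c\in Z_2(\G,\Z)$ we have $\partial_2 c=\sum_j k_j([a_j]-[a_jb_j]+[b_j])=0$ in $C_1(\G,\Z)$, hence $0=\psi_n(\partial_2 c)=\sum_j k_j\big(\chi_n(\rho_n(a_j))-\chi_n(\rho_n(a_jb_j))+\chi_n(\rho_n(b_j))\big)=\chi_n(y_n)$, the last step using commutativity of the target to collect, for each $j$, the three terms into $\chi_n$ of $\big(\rho_n(a_j)\rho_n(b_j)\rho_n(a_jb_j)^{-1}\big)^{k_j}$. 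Thus $y_n\in\ker\chi_n=[\GL_\infty(A_n),\GL_\infty(A_n)]=[\GL_\infty^0(A_n),\GL_\infty^0(A_n)]$, the last equality being the consequence of Whitehead's lemma recalled in Section~\ref{sec:2}. In particular $y_n\in\GL_\infty^0(A_n)$, so $\Delta_{\tau_n}(y_n)$ is defined; and since $\Delta_{\tau_n}$ is a group homomorphism into an abelian group, it vanishes on $[\GL_\infty^0(A_n),\GL_\infty^0(A_n)]$, giving $\Delta_{\tau_n}(y_n)=0$.

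Assembling the pieces, $\Delta\big(\prod_j w_j^{k_j}\big)=(\Delta_{\tau_n}(y_n))_n=0$ in $Q(\C)/H$, i.e. $q\big(\langle[\rho],[c]\rangle\big)=0$, which is exactly $\langle[\rho],[c]\rangle\in H$. The step I expect to need the most care is the bookkeeping around the domain of $\Delta_{\tau_n}$: one must know that $y_n$ genuinely lies in $\GL_\infty^0(A_n)$ (so $\Delta_{\tau_n}$ can be evaluated on it) and that it is a product of commutators of elements \emph{of} $\GL_\infty^0(A_n)$ rather than merely of $\GL_\infty(A_n)$ — both of which are handed to us by the identity $[\GL_\infty(A_n),\GL_\infty(A_n)]=[\GL_\infty^0(A_n),\GL_\infty^0(A_n)]$. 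As an alternative to introducing $\chi_n$, one can first replace $c$ by a Hopf representative $\varphi\big(\prod_i[a_i,b_i]\big)$ with $\prod_i[\bar a_i,\bar b_i]=1$ and invoke Proposition~\ref{prop:Hopf}, which rewrites $\langle[\rho],[c]\rangle$ as $L\big(\prod_i[\rho(\bar a_i),\rho(\bar b_i)]\big)$; then $y_n=\prod_i[\rho_n(\bar a_i),\rho_n(\bar b_i)]$ is manifestly a product of commutators and the remainder of the argument goes through verbatim.
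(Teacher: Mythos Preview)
Your proof is correct. The main argument you give takes a slightly different route from the paper's: the paper first invokes Proposition~\ref{prop:Hopf} to replace $c$ by a Hopf representative $\varphi(r)$, so that $\langle[\rho],[c]\rangle=L\big(\prod_i[\rho(\bar a_i),\rho(\bar b_i)]\big)$ is visibly $L$ applied to a product of commutators, and then concludes at once that $q$ of this vanishes since each $\Delta_{\tau_n}$ kills commutators. You instead work with an arbitrary cycle representative and extract the commutator property of $y_n$ directly from the cycle condition $\partial_2 c=0$ via the abelianization map $\chi_n$, which is a clean and self-contained maneuver that bypasses the preparatory calculation of Proposition~\ref{prop:Hopf}. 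Your final paragraph --- pass to a Hopf representative and cite Proposition~\ref{prop:Hopf} --- is exactly the paper's argument, so you have in fact recorded both proofs.
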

\begin{proof} By Proposition~\ref{prop:Hopf}, we may take $c=\varphi(r)$ with $r$ a commutator and we have
\begin{equation}\label{eq:vanish-q}
 q(\langle[\rho],[\varphi(r)]\rangle)=q\circ L\left(\prod_{i=1}^{g} [\rho(\bar{a}_i),\rho(\bar{b}_i)]\right)=\Delta\left(\prod_{i=1}^{g} [\rho(\bar{a}_i),\rho(\bar{b}_i)]\right)=0  
 \end{equation}
 since each $\Delta_{\tau_n}$ vanishes on the commutator subgroup $[\GL_{\infty}(A_n),\GL_{\infty}(A_n)]$.
\end{proof}
\begin{corollary}
   $[\rho]$ belongs to the image of the map $H^2(\G,H)\to H^2(\G,Q(\C))$.
\end{corollary}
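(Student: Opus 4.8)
The plan is to produce, from the long exact sequence in cohomology coming from the coefficient sequence
\[
0 \longrightarrow H \longrightarrow Q(\C) \stackrel{q}{\longrightarrow} Q(\C)/H \longrightarrow 0
\]
(viewed as a short exact sequence of trivial $\G$-modules), a cohomology class in $H^2(\G,H)$ that maps to $[\rho]$ under the induced map $H^2(\G,H)\to H^2(\G,Q(\C))$. By exactness it is enough to show that the image of $[\rho]$ under the induced map $q_*:H^2(\G,Q(\C))\to H^2(\G,Q(\C)/H)$ vanishes. This is precisely what Proposition~\ref{prop:integral} (together with the explicit cocycle formula~\eqref{eq:2-cocycle}) gives us, so the work is essentially bookkeeping rather than new mathematics.

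First I would observe that applying $q$ coefficientwise to the cocycle $\omega\in Z^2(\G,Q(\C))$ of~\eqref{eq:2-cocycle} yields the cocycle $q\circ\omega\in Z^2(\G,Q(\C)/H)$ representing $q_*[\rho]$. Next I would invoke Proposition~\ref{prop:integral}: for every $c\in Z_2(\G,\Z)$ we have $\langle[\rho],[c]\rangle\in H$, hence $\langle q_*[\rho],[c]\rangle=q(\langle[\rho],[c]\rangle)=0$ in $Q(\C)/H$. Since $Q(\C)/H$ is divisible (it is a quotient of $\ell^\infty(\N,\C/\tau_{n*}(K_0(A_n)))$ modulo a subgroup, and each $\C/\tau_{n*}(K_0(A_n))$ is divisible, being a quotient of the divisible group $\C$), the universal coefficient theorem identifies $H^2(\G,Q(\C)/H)$ with $\Hom(H_2(\G,\Z),Q(\C)/H)$ via the Kronecker pairing. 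A class in $H^2(\G,Q(\C)/H)$ that pairs trivially with every element of $H_2(\G,\Z)$ is therefore $0$. Thus $q_*[\rho]=0$.

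Then, from the long exact sequence
\[
\cdots \longrightarrow H^2(\G,H) \longrightarrow H^2(\G,Q(\C)) \stackrel{q_*}{\longrightarrow} H^2(\G,Q(\C)/H) \longrightarrow \cdots,
\]
the vanishing $q_*[\rho]=0$ shows $[\rho]$ lies in the image of $H^2(\G,H)\to H^2(\G,Q(\C))$, which is the claim.

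The only genuine subtlety, and the step I would be most careful about, is the divisibility of $Q(\C)/H$ needed to apply the universal coefficient theorem; if one prefers to avoid that hypothesis, the alternative is a direct cocycle-level argument, constructing an explicit lift of $\omega$ to an $H$-valued cocycle. Concretely, one chooses a free presentation $1\to R\to F\to\G\to1$ and uses the fact that over a free group every $Q(\C)$-cocycle is a coboundary; the indeterminacy in the splitting can then be pushed into $H$ using that $\langle[\rho],[c]\rangle\in H$ for all $c\in R\cap[F,F]/[R,F]$ via Hopf's formula and Proposition~\ref{prop:Hopf}. Either route works, but the divisibility/UCT argument is the shortest, so I would present that one and remark on the alternative.
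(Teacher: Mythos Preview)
Your proposal is correct and follows essentially the same route as the paper: use exactness of $H^2(\G,H)\to H^2(\G,Q(\C))\stackrel{q_*}{\to}H^2(\G,Q(\C)/H)$, then kill $q_*[\rho]$ by combining Proposition~\ref{prop:integral} with the universal coefficient theorem, which applies because $Q(\C)/H$ is divisible. One small remark: your justification of divisibility is slightly roundabout---$Q(\C)/H$ is simply a quotient of $Q(\C)=c_0(\N,\C)/c_{00}(\N,\C)$, which is already divisible as a quotient of the $\C$-vector space $c_0(\N,\C)$; no need to pass through $\ell^\infty(\N,\C/\tau_{n*}(K_0(A_n)))$.
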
 
\begin{proof}
   Since the sequence 
  \[H^2(\G,H)\to H^2(\G,Q(\C))\stackrel{q_*}\longrightarrow 
 H^2 (\G, Q(\C)/H)),\]
  is exact in the middle and both  $Q(\C)$ and $Q(\C)/H$ are divisible groups, using the universal coefficient theorem,
  it suffices to show that the image of $[\rho]$ in $H^2 (\G, Q(\C)/H))$ pairs trivially with
  any 2-cycle $c\in Z_2(\G,\Z)$. But this is exactly what  we have verified in equation~\eqref{eq:vanish-q}.
  \end{proof}
\begin{remark}
    As a consequence of Proposition ~\ref{prop:integral}, we see that if $c$ is a 2-cycle as in \eqref{eq:pre}, then for all sufficiently large $n:$
    $$\sum_{j=1}^m\frac{1}{2 \pi i}  k_j\tau_{n}\left(\log \left(\rho_n(a_j)\rho_n(b_j)\rho_n(a_jb_j)^{-1}\right)\right)=\tau_{n*}(y_n)\quad \text{for some}\quad y_n\in K_0(A_n).$$ It is the natural to inquire how  $[c]\in H_2(G,\Z)$ is related   $y\in K_0(A_n)$. Assuming that $A_n$ are $C^*$-algebras, we shall explain this by invoking an index formula from \cite{DDD}, which we review below.
\end{remark}
Let
$\beta^{\G}:  H_2(\G,\Z) \cong H_2(B\G,\Z) \to RK_0(B\G)$ be  the (rationally injective) homomorphism
 studied in \cite{Bettaieb-Matthey-Valette},   \cite{MR1951251} and let $\alpha^\G : H_2(\G,\Z) \to K_0(\ell^1(\G))$ be the composition \mbox{$\alpha^\G=\mu_1^\G  \circ \beta^\G$} where $\mu_1^\G:RK_0(B\G)\to K_0(\ell^1(\G))$ is  the $\ell^1$-version of the assembly map of Lafforgue \cite{Lafforgue}.
The  linear extension $\rho:\ell^1(\G)\to M_n(\C)$ of a sufficiently multiplicative unital map
 $\rho:\G \to U(n)$ satisfies the following: 
 \vskip 4pt
\begin{theorem}[\cite{DDD}]\label{thm:ddd} 
\emph{ Let $x\in H_2(\G,\Z)$ be represented by  a product of commutators
 $\prod_{i=1}^{g} [a_i,b_i]$ with $a_i,b_i \in F$ and $\prod_{i=1}^{g} [\bar{a}_i,\bar{b}_i]=1$.
 There exist a finite set $S \subset \G$  and $\varepsilon>0$ such that if $(A,\tau)$ is a tracial $C^*$-algebra and if $\rho:\G \to U(A)$ is a unital map  with
 $\|\rho(st)-\rho(s)\rho(t)\|<\varepsilon$ for all $s,t \in S$, then}
 
  \begin{equation}\label{eq:indexc0}
  \tau_*(\rho_\sharp( \alpha^\G(x)))=\frac{1}{2\pi i}\tau\left( \log \left(\prod_{i=1}^{g} [\rho(\bar{a}_i),\rho(\bar{b}_i)]\right)\right).
 \end{equation}
 \end{theorem}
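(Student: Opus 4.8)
The plan is to reduce the index formula to a homotopy-theoretic statement about the behavior of the maps $\rho_\sharp$ and $\tau_*$ and to the already-established functorial properties of the de la Harpe--Skandalis determinant. The starting point is that $\alpha^\G(x)\in K_0(\ell^1(\G))$ has an explicit formula in terms of an idempotent built out of the word $w=\prod_{i=1}^g [a_i,b_i]$: because $\prod_{i=1}^g[\bar a_i,\bar b_i]=1$ in $\G$, the corresponding product of commutators of the canonical unitaries $\{u_s:s\in\G\}\subset \ell^1(\G)$ is again $1$, and standard lifting formulas (going back to the construction of $\beta^\G$ and $\mu^\G_1$ in \cite{Bettaieb-Matthey-Valette,MR1951251,Lafforgue}) present $\alpha^\G(x)$ as $[p]-[e]$ for an idempotent $p$ that is a universal noncommutative polynomial in the $u_{\bar a_i},u_{\bar b_i}$ and their inverses, with $e$ a trivial idempotent of the same size. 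The whole point is that this $p$ is \emph{universal}: it lives in a fixed matrix algebra over the group ring of a free group $F_{2g}$ on generators $x_i,y_i$, and applying any unital map that sends $x_i\mapsto u_i$, $y_i\mapsto v_i$ with $\prod[u_i,v_i]=1$ produces the corresponding $K_0$-class. Applying $\rho:\ell^1(\G)\to M_N(\C)$ (for $A$ a matrix algebra; the general tracial-$C^*$-algebra case is identical) to this idempotent yields $\rho_\sharp(\alpha^\G(x))=[\rho(p)]-[\rho(e)]\in K_0(A)$, so that $\tau_*(\rho_\sharp(\alpha^\G(x)))=\tau(\rho(p))-\tau(\rho(e))$.

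Next I would connect the right-hand side, $\frac1{2\pi i}\tau(\log(\prod[\rho(\bar a_i),\rho(\bar b_i)]))$, to the determinant $\widetilde{\Delta}_\tau$. The element $g:=\prod_{i=1}^g[\rho(\bar a_i),\rho(\bar b_i)]\in \GL_N(\C)$ is close to $1$ when $\rho$ is sufficiently multiplicative on a large enough finite set $S$ (containing all the letters appearing in $w$ and their products), because each $[\rho(\bar a_i),\rho(\bar b_i)]$ is close to $\rho([\bar a_i,\bar b_i])$ and the total product in $\G$ is trivial. Hence $\frac1{2\pi i}\tau(\log g)=L_\tau(g)=\widetilde\Delta_\tau(\xi_g)$ with $\xi_g(t)=(1-t)1+tg$, by \eqref{def:Ltau}. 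The main work is then to exhibit a piecewise-smooth path in $\GL_N^0(A)$ from $1$ to an idempotent-loop datum realizing $g$, so that Proposition~\ref{dlhsprop}(e) applies. Concretely: the loop $\xi_g$ is homotopic with fixed endpoints (after concatenating with short null-homotopic pieces coming from the defects $\rho(st)\rho(t)^{-1}\rho(s)^{-1}-1$) to a loop of the form $\xi_p$ where $p=\rho(p)$ is the image of the universal idempotent, up to conjugation and stabilization; this is exactly the classical argument identifying the de la Harpe--Skandalis determinant of a product of commutators with the trace of the associated $K_0$-class, and it is essentially the content of the cited \cite{DDD}. Therefore $\widetilde\Delta_\tau(\xi_g)=\tau(\rho(p))-\tau(\rho(e))$, matching the left-hand side.

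The hard part is controlling the homotopy in the second step \emph{quantitatively and uniformly in $(A,\tau)$}: one must produce the finite set $S\subset\G$ and the threshold $\varepsilon>0$ depending only on the word $w$ (equivalently on $g$), not on $A$, so that for every tracial $C^*$-algebra $A$ and every $\varepsilon$-multiplicative $\rho$ the relevant elements stay inside the region $\|\cdot-1\|<1$ where $\log$ and the homotopies are defined, and the bookkeeping of the ``correction'' paths (inserting $\rho(\bar a_i)\rho(\bar a_i^{-1})$ versus $1$, reassociating products, etc.) closes up exactly. Since all of this is controlled by finitely many universal algebraic identities in the free group $F_{2g}$ together with the uniform estimate $\|\log u\|\le 2\|u-1\|$ for $\|u-1\|<1/2$ (noted after Definition~\ref{def:LLtau}) and the norm bound $\sup_s\|\rho(s)\|<\infty$ (which one may also take to be a universal constant, e.g. $2$, by restricting $\varepsilon$), the constants can indeed be chosen independently of $A$. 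I would cite \cite{DDD} for the detailed verification and present here only the reduction to the universal idempotent and the determinant computation, since Theorem~\ref{thm:ddd} is quoted verbatim from that source.
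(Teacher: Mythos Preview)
The paper does not contain a proof of Theorem~\ref{thm:ddd}: the result is quoted from \cite{DDD} and used as a black box to derive Corollary~\ref{index}, so there is no argument here against which to compare your proposal. You yourself recognize this in your final sentence, and your instinct to cite \cite{DDD} rather than reprove the result is exactly what the paper does.

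That said, as a sketch of what the argument in \cite{DDD} must look like, your outline is plausible in broad strokes but vague at the crucial juncture. The reduction to a universal idempotent in matrices over a free-group ring and the identification $L_\tau(g)=\widetilde\Delta_\tau(\xi_g)$ are fine. The step you flag as ``the hard part'' --- producing a fixed-endpoint homotopy from $\xi_g$ to a loop of the form $\xi_p$ so that Proposition~\ref{dlhsprop}(e) applies --- is not merely bookkeeping: one needs an explicit formula for the idempotent $p$ representing $\alpha^\G(x)$ and a concrete path in $\GL_\infty(A)$ connecting the commutator product to the idempotent loop, and your proposal does not supply either. Saying ``this is exactly the classical argument'' and ``essentially the content of the cited \cite{DDD}'' is not a proof; it is a pointer to one. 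If you intend only to cite the result, your last sentence suffices and the preceding two paragraphs should be cut.
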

 Here, if we write $\alpha^\G(x)=[p_0]-[p_1],$ where $p_i$ are projections in matrices over $\ell^1(\G)$, then  $\rho_\sharp( \alpha^\G(x)) = \rho_\sharp(p_0) -
 \rho_\sharp(p_1),$ where $\rho_\sharp(p_i)\in K_0(A)$ is the K-theory class of the perturbation of $(\mathrm{id}\otimes \rho )(p_i)$ to a projection via analytic functional calculus.
 
From Theorem~\ref{thm:ddd} we see that if
  $c=\sum_{j=1}^{m}k_j [a_j|b_j]\in Z_2(\G,\Z)$, and if $x=\varphi^{-1}(c)$ is the corresponding element in the Hopf description of $H^2(\G,\Z)$,
  then for all sufficiently large $n$,
  \begin{equation*}\label{eq:indexc01}
 \tau_{n*}(\rho_{n\sharp}( \alpha^\G(x)) =\sum_{j=1}^m\frac{1}{2 \pi i}  k_j\tau_{n}\left(\log \left(\rho_n(a_j)\rho_n(b_j)\rho_n(a_jb_j)^{-1}\right)\right).
 \end{equation*}
   
Let $A=\ell^\infty(\N,A_n)$, $J=c_0(\mathbb{N},A_n)$
and $\tau:A\to \ell^\infty(\N,\C)$ be as in 
 Section~\ref{sec:3}.
 
Then we have an induced maps 
\[\dot\rho_{*}: K_0(\ell^1(\G))\to K_0(A/J)\quad \text{and} \quad \tau_*:K_0(A/J)\to \ell^\infty(\N,\C)/c_{00}(\N,C).\]

\begin{corollary}\label{index}
For any $c\in Z_2(\G,\Z)$, let $x=\varphi^{-1}(c)$ be the corresponding element in the Hopf description of $H^2(\G,\Z)$.
  Then \begin{equation}\label{eq:indexc01+}
     \tau_{*}(\dot\rho_{*}( \alpha^\G(x)) =\langle [\rho],[c]\rangle.
 \end{equation}
\end{corollary}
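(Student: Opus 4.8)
The plan is to reduce the statement to the index theorem from \cite{DDD} (Theorem~\ref{thm:ddd}) applied componentwise, passing to the quotient $A/J$ at the end. First I would use the fact that the Kronecker pairing is bilinear and that $\varphi$ is an isomorphism to reduce to the case $c = \varphi(r)$ where $r\in H_2(\G,\Z)$ is represented by a single product of commutators $\prod_{i=1}^g [a_i,b_i]$ in the Hopf formula, exactly as in the proof of Proposition~\ref{prop:integral}; by Proposition~\ref{prop:Hopf} we then have $\langle[\rho],[c]\rangle = L\big(\prod_{i=1}^g [\rho(\bar a_i),\rho(\bar b_i)]\big)$, where the right-hand side is computed in $P_1/P_0$ and lands in $Q(\C)$.

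Next I would fix the finite set $S\subset\G$ and $\ep>0$ provided by Theorem~\ref{thm:ddd} for the element $x=\varphi^{-1}(c)$. Since $(\rho_n)_n$ is an asymptotic homomorphism (Remark~\ref{eq:asymptotic}), there is $N$ such that for all $n\geq N$ one has $\|\rho_n(st)-\rho_n(s)\rho_n(t)\|<\ep$ for all $s,t\in S$. For each such $n$, Theorem~\ref{thm:ddd} applied to $(A_n,\tau_n)$ and the unital map $\rho_n:\G\to U(A_n)$ gives
\[
\tau_{n*}\big(\rho_{n\sharp}(\alpha^\G(x))\big)=\frac{1}{2\pi i}\,\tau_n\!\left(\log\Big(\prod_{i=1}^g [\rho_n(\bar a_i),\rho_n(\bar b_i)]\Big)\right).
\]
The right-hand side is precisely the $n$-th component of $L\big(\prod_{i=1}^g [\rho(\bar a_i),\rho(\bar b_i)]\big)$, by formula~\eqref{eq:defL} for $L$. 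Hence the sequences $\big(\tau_{n*}(\rho_{n\sharp}(\alpha^\G(x)))\big)_n$ and the representative of $\langle[\rho],[c]\rangle$ in $c_0(\N,\C)$ agree for all $n\geq N$, so they agree modulo $c_{00}(\N,\C)$.

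Finally I would identify the left-hand side with $\tau_*(\dot\rho_*(\alpha^\G(x)))$. The key point is compatibility of the constructions with the quotient $A\to A/J$: the class $\rho_{n\sharp}(p)\in K_0(A_n)$, obtained by perturbing $(\mathrm{id}\otimes\rho_n)(p)$ to a projection, assembles over $n$ to a class in $K_0(\ell^\infty(\N,A_n))$ whose image in $K_0(A/J)$ equals $\dot\rho_*([p])$, since $\dot\rho$ is defined through the composition $\G\xrightarrow{\rho}P\xrightarrow{\nu}P/P_1$ and analytic functional calculus commutes with the quotient map; likewise the trace $\tau:A\to\ell^\infty(\N,\C)$ descends to $\tau_*:K_0(A/J)\to \ell^\infty(\N,\C)/c_{00}(\N,\C)$ and is computed componentwise by the $\tau_{n*}$. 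Therefore $\tau_*(\dot\rho_*(\alpha^\G(x)))$ is exactly the class in $\ell^\infty(\N,\C)/c_{00}(\N,\C)$ of $\big(\tau_{n*}(\rho_{n\sharp}(\alpha^\G(x)))\big)_n$, which by the previous paragraph equals $\langle[\rho],[c]\rangle$.

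The main obstacle is the bookkeeping in this last step: one must check carefully that the ``perturb to a projection via functional calculus'' operation is compatible with the $C^*$-algebra quotient $\ell^\infty(\N,A_n)\to A/J$ and that the resulting $K$-theory classes match $\dot\rho_*$ applied to the fixed $K_0(\ell^1(\G))$-classes, uniformly enough that only the behavior for large $n$ matters (which is why the target group is the quotient by $c_{00}$, not by $c_0$). Everything else is a direct assembly of Proposition~\ref{prop:Hopf}, Theorem~\ref{thm:ddd}, and the definitions.
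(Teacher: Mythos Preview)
Your proposal is correct and follows essentially the same approach as the paper: the paper's proof is nothing more than the sentence preceding the corollary, which applies Theorem~\ref{thm:ddd} componentwise for all sufficiently large $n$ to obtain the equality of the $n$-th components, and then passes to the quotient by $c_{00}(\N,\C)$; your use of Proposition~\ref{prop:Hopf} to identify the right-hand side of Theorem~\ref{thm:ddd} with $\langle[\rho],[c]\rangle$ is exactly what the paper does implicitly. The ``bookkeeping obstacle'' you flag in the last paragraph is also left implicit in the paper.
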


\section{C*-Algebra non-stability for groups with nontrivial 2-cohomology}\label{sec:5}
\begin{definition}\label{def:Cstar-stable}
	Let  $\G$ be a countable discrete group and let $\mathcal{B}$ be a class of unital $C^*$-algebras.
	\begin{itemize}
		\item[(a)]	 $\G$ is called local-to-local $\mathcal{B}$-stable if  for any sequence of unital maps
		$\{\rho_n:\G \to U(B_n)\}$ with $B_n\in\mathcal{B}$  and
		\begin{equation}\label{a1}\lim_{n\to \infty} \|\rho_n(st)-\rho_n(s)\rho_n(t)\|=0, \quad \text{for all}\,\, s,t\in \G,\end{equation}
		there exists a sequence of homomorphisms $\{\pi_n:\G\to U(B_n)\}$ such that
		\begin{equation}\label{a2}\lim_{n\to \infty}  \|\rho_n(s)-\pi_n(s)\|=0, \quad \text{for all}\,\, s\in \G.\end{equation}
		\item[(b)] $\G$ is called uniform-to-local $\mathcal{B}$-stable if for any sequence of unital maps
		$\{\rho_n:\G \to U(B_n)\}$ that satisfies 
		\begin{equation}\label{a11}\lim_{n\to \infty} \left(\sup_{s,t\in \G}  \|\rho_n(st)-\rho_n(s)\rho_n(t)\|\right)=0, \end{equation}
		there exists a sequence of homomorphisms $\{\pi_n:\G\to U(B_n)\}$  satisfying \eqref{a2}.
	\end{itemize}
\end{definition}
\begin{remark}\label{rem:stability}
	Local-to-local $\mathcal{B}$-stability  is equivalent to asking that for any finite set $F\subset \G$ and any $\ep>0,$ there exist a finite set $S\subset \G$ and $\delta>0$ with the property that for any  unital map $\rho:\G \to U(B),$ $B\in \mathcal{B},$ with $\|\rho(s)\rho(t)-\rho(s)\|<\delta$ for all $s,t \in S$, there is a representation $\pi:\G \to U(B)$ such that $\|\rho(s)-\pi(s)\|<\ep$ for all $s\in F.$ 
	Uniform-to-local stability admits a characterization in the same vein. A map  $\rho$ as above is called an $(S,\delta)$-representation.
\end{remark}
Let $Q$ be an abelian group with trivial $\G$-action.
If $Q$ is a subgroup of $\R$ (such as $\Z$, $\Q$ or $\R$), we consider the bounded cohomology group $H_b^2(\G,Q),$ which defined similarly to  $H^2(\G,Q),$
except that all cocycles and coboundaries are required to be bounded functions $\G^k \to Q \subset \R.$ There is an obvious canonical comparison map $J:H_b^2(\G,Q)\to H^2(\G,Q).$
%%%%%%%%
\begin{notation}\label{notation}  
	(Projective representations associated to  2-cocycles.)
	Let $\sigma\in Z^2(\G,\R)$ be  a normalized $2$-cocycle and let \(0 \to \R \to E \stackrel{q}{\longrightarrow} \Gamma \to 1\) be its associated extension; see preliminaries for a construction.  
	For each $\theta\in [0,1],$ we have a character $\R \to \mathbb{T},$ $r\mapsto e^{2\pi i\theta r}$ and the corresponding $2$-cocycle $\sigma_\theta\in Z^2(\G,\mathbb{T}),$ $
	\sigma_\theta(s,t)=e^{2\pi i\theta\sigma(s,t)}.$ 
	Consider the 
	 twisted full group $C^*$-algebra, $C^*(\Gamma,\sigma_{\theta})$
	and its quotient map onto the reduced twisted group $C^*$-algebra, $p:C^*(\Gamma,\sigma_{\theta})\to C_r^*(\Gamma,\sigma_{\theta})$, \cite{Packer-Raeburn}.
	By construction, there is a unital map $\rho_\theta:\G \to C^*(\Gamma,\sigma_{\theta})$
	that satisfies the equation 
	\begin{equation}\label{eq:projective}
		\rho_\theta (s)\rho_\theta (t)\rho_\theta (st)^{-1}=e^{2 \pi i\theta \sigma(s,t)},\,\, s,t \in \G.
	\end{equation}
	The map $\bar\rho_\theta:=p\circ \rho_\theta:\G \to C^*_r(\G,\sigma_\theta)$
	coincides with
	the composition of the induced representation $\mathrm{Ind}_{Q}^E (\omega_{\theta})$
	with the section $\gamma$ and in fact $\bar\rho_\theta:\G \to U(C_r^*(\G,\sigma_\theta))\subset U(\ell^2(\G)),$ is given by $\bar\rho_\theta (s)\delta_t=\sigma_\theta (s,t)\delta_{st}.$
	There is a canonical tracial state $\tau:C_r^*(\G,\sigma_{\theta})\to \C$ induced by vector state $\tau(a)=\langle a \delta_e,\delta_e\rangle.$
    This induces a tracial state on $C^*(\G,\sigma_{\theta}).$
    The corresponding twisted von Neumann algebra  denoted $L(G,\sigma_{\theta}),$ is the weak closure of $C_r^*(\G,\sigma_{\theta}).$
\end{notation}
Choose a sequence
	$\theta_n \searrow 0$ together with  a sequence of unital homomorphisms $\pi_n: C^*(\G,\sigma_{\theta_n})\to A_n$ to tracial $C^*$-algebras $(A_n,\tau_n)$ with $\tau_n(1_n)=1.$
For any choice of $(\theta_n),$ there exist such $C^*$-algebras $A_n$ and corresponding maps $(\pi_n)$.
	Indeed, one may choose $A_n=C_r^*(\G,\sigma_{\theta_n})$ or $A_n=L(\G,\sigma_{\theta_n})$. Subsequently, we will discuss examples where $A_n$ are matricial $C^*$-algebras $M_{k(n)}$. 
The sequence $\rho_n:=\pi_n \circ \rho_{\theta_n}: \G \to U(A_n)$  satisfies  the conditions 
 \begin{equation}\label{eqn:gestimate}
		\rho_n(s)\rho_n (t)\rho_n (st)^{-1}=e^{2 \pi i\theta_n \sigma(s,t)} 1_n\,\, \text{and} \quad \lim_{n\to \infty}\|\rho_{n}(s)\rho_{n}(t)-\rho_{n}(st)\|= 0, \quad\forall \, s,t \in \G.
	\end{equation}
\begin{proposition}\label{prompt}
  If $\rho=(\rho_n)_n$ is a sequence of projective representations as in \ref{notation}, 
 then  $[\rho]$ is the image of $[\sigma]$ under the injective map $q_*:H^2(\G,\R)\to H^2(\G,Q(\R))$ induced by the homomorphism $q:\R\to Q(\R)$, $r\mapsto (\theta_nr)_n$.
\end{proposition}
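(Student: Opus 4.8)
The plan is to produce explicit representing $2$-cocycles for $[\rho]$ and for $q_*[\sigma]$, observe that they coincide, and then obtain injectivity of $q_*$ from divisibility and the universal coefficient theorem. First I would note that $q\colon\R\to Q(\R)$, $r\mapsto(\theta_nr)_n+c_{00}(\N,\R)$, is a well-defined group homomorphism, since $\theta_n\searrow 0$ forces $(\theta_n r)_n\in c_0(\N,\R)$. Because $\rho_n=\pi_n\circ\rho_{\theta_n}$ with $\pi_n$ a unital homomorphism, the defining relation $\rho_{\theta_n}(s)\rho_{\theta_n}(t)\rho_{\theta_n}(st)^{-1}=e^{2\pi i\theta_n\sigma(s,t)}$ yields $\rho_n(s)\rho_n(t)\rho_n(st)^{-1}=e^{2\pi i\theta_n\sigma(s,t)}1_n$, while $\tau_n(1_n)=1$. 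This places us in the situation of Corollary~\ref{cor:proj-appl} (in its unitary $C^*$-algebra form, where the target group is $Q(\R)$ rather than $Q(\C)$ because $\tfrac{1}{2\pi i}\tau_n(\log u_n)$ is real for unitaries $u_n$), applied with $\sigma_n:=\theta_n\sigma\in Z^2(\G,\R)$; note $\theta_n\sigma(s,t)\to 0$ for each fixed pair $s,t$. Equivalently, one may argue straight from \eqref{eq:2-cocycle}: for $n$ large one has $|e^{2\pi i\theta_n\sigma(s,t)}-1|<1$, and the principal logarithm gives $\log(e^{2\pi i\theta_n\sigma(s,t)}1_n)=2\pi i\,\theta_n\sigma(s,t)\,1_n$, so the $n$-th coordinate of the representing cocycle is $\theta_n\sigma(s,t)$. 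Either way, $[\rho]$ is represented by the cocycle $\omega$ with $\omega(s,t)=(\theta_n\sigma(s,t)\,\tau_n(1_n))_n=(\theta_n\sigma(s,t))_n$ in $Q(\R)$.

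On the other hand, pushforward of cocycles along the coefficient homomorphism $q$ sends $\sigma$ to $q\circ\sigma$, whose components are $q(\sigma(s,t))=(\theta_n\sigma(s,t))_n+c_{00}(\N,\R)$. Thus $\omega$ and $q\circ\sigma$ are literally the same element of $Z^2(\G,Q(\R))$, and hence $[\rho]=q_*[\sigma]$.

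It remains to see that $q_*$ is injective. For this I would use that $\R$ and $Q(\R)=c_0(\N,\R)/c_{00}(\N,\R)$ are divisible abelian groups, being quotients of real vector spaces; by the universal coefficient theorem the Kronecker pairing then identifies $H^2(\G,M)\cong\Hom(H_2(\G,\Z),M)$ functorially in the coefficient group $M$, under which identification $q_*$ becomes post-composition with $q$. This is injective because $q$ is: if $r\neq 0$ then $\theta_n r\neq 0$ for every $n$, so $(\theta_n r)_n\notin c_{00}(\N,\R)$. I do not expect a genuine obstacle; the closest thing to one is the bookkeeping in comparing cocycles modulo $c_{00}(\N,\R)$ together with the fact that $L_\tau$ is only defined within distance one of the identity, so that for the finitely many small $n$ with $|e^{2\pi i\theta_n\sigma(s,t)}-1|\geq 1$ the $n$-th coordinate of $\omega(s,t)$ is simply not determined — harmless in $Q(\R)=c_0/c_{00}$, and consistent with the abuse-of-notation convention already adopted for elements of $P/P_0$.
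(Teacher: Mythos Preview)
Your proof is correct and follows essentially the same approach as the paper: invoke Corollary~\ref{cor:proj-appl} (using $\tau_n(1_n)=1$) to see that $[\rho]$ is represented by $(\theta_n\sigma)_n=q\circ\sigma$, and then deduce injectivity of $q_*$ from the universal coefficient theorem together with divisibility of the coefficients. The only cosmetic difference is in the injectivity step: the paper pairs $q_*[\sigma]$ directly with a homology class $c$ to get $(\theta_n\langle\sigma,c\rangle)_n\neq 0$, whereas you pass through the functorial identification $H^2(\G,M)\cong\Hom(H_2(\G,\Z),M)$ and observe that post-composition with the injective map $q$ is injective; these are the same argument viewed from two angles.
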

\begin{proof}
 By Corollary~\ref{cor:proj-appl}, $[\rho]\in H^2(\G,Q(\R))$ is represented by the cocycle
 $(\theta_n \sigma)_n$.
 Let us verify that $q_*$ is injective. Suppose $[\sigma]\neq 0$.
By the universal coefficient theorem, $H^2(\G,\R)\cong \Hom(H_2(\G,\Z),\R),$ and hence there is a 2-cocycle $c\in Z_2(\G,\Z)$  with nontrivial Kronecker pairing  $\langle\sigma,c\rangle\neq 0.$
Then $ \langle q_*(\sigma),c\rangle=(\theta_n \langle\sigma,c\rangle)_n \neq 0.$
\end{proof}
The following is Theorem~\ref{thm:Ai} from the introduction. We prove it simultaneously with Theorem~\ref{thm-vN}.
\begin{theorem}\label{thm:A} Let  $\G$ be a discrete countable group.
	\begin{itemize}
		\item[(1)] If $ H^2(\Gamma,\R)\neq 0$, then $\Gamma$ is not local-to-local stable with respect the class of separable unital tracial $C^*$-algebras.
		\item[(2)] If the comparison map $J: H^2_b(\Gamma,\R)\to H^2(\Gamma,\R)$ is nonzero, then $\Gamma$ is not uniform-to-local stable with respect the class of separable unital tracial $C^*$-algebras.
	\end{itemize} 
\end{theorem}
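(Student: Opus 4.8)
The plan is to deduce Theorem~\ref{thm:A} directly from the machinery built in Sections~\ref{sec:3}--\ref{sec:5}, and in particular from Theorem~\ref{thm:basic}, Corollary~\ref{cor:proj-appl} and Proposition~\ref{prompt}, together with the construction in Notation~\ref{notation}. First I would fix a normalized 2-cocycle $\sigma$ with $[\sigma]\in H^2(\G,\R)\setminus\{0\}$, and in case (2) I would additionally choose $\sigma$ to be a \emph{bounded} representative of a class in the image of the comparison map $J$, which is possible precisely because $J$ is nonzero. Then I would pick any sequence $\theta_n\searrow 0$ and form the twisted group $C^*$-algebras $C^*(\G,\sigma_{\theta_n})$ as in Notation~\ref{notation}; these are separable unital $C^*$-algebras carrying a canonical tracial state $\tau_n$ with $\tau_n(1_n)=1$, so they lie in the class $\mathcal B$ of separable unital tracial $C^*$-algebras. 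The associated sequence $\rho_n:=\rho_{\theta_n}:\G\to U(C^*(\G,\sigma_{\theta_n}))$ satisfies $\rho_n(s)\rho_n(t)\rho_n(st)^{-1}=e^{2\pi i\theta_n\sigma(s,t)}1_n$, hence $\|\rho_n(st)-\rho_n(s)\rho_n(t)\|\to 0$ pointwise in case (1), and uniformly in $s,t$ in case (2) since $\sigma$ is then bounded and $\theta_n\to 0$.

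Next I would invoke the obstruction. By Corollary~\ref{cor:proj-appl} (or directly Proposition~\ref{prompt}), the class $[\rho]\in H^2(\G,Q(\R))$ equals $q_*[\sigma]$ where $q:\R\to Q(\R)$, $r\mapsto(\theta_n r)_n$, and $q_*$ is injective by the universal coefficient argument in the proof of Proposition~\ref{prompt}; since $[\sigma]\neq 0$ we conclude $[\rho]\neq 0$. Concretely, choosing $c\in Z_2(\G,\Z)$ with $\langle\sigma,c\rangle\neq 0$ gives $\langle[\rho],[c]\rangle=(\theta_n\langle\sigma,c\rangle)_n\neq 0$ in $Q(\R)$. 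Now suppose, for contradiction, that $\G$ were local-to-local $\mathcal B$-stable (resp. uniform-to-local $\mathcal B$-stable in case (2)); then there would exist a sequence of group homomorphisms $\pi_n:\G\to U(C^*(\G,\sigma_{\theta_n}))$ with $\|\rho_n(s)-\pi_n(s)\|\to 0$ for all $s\in\G$. But by Theorem~\ref{thm:basic} the existence of such $\pi_n$ forces $[\rho]=0$ in $H^2(\G,Q(\R))$ (note $U\subset GL$, so homomorphisms into the unitary group are in particular homomorphisms into $GL$), contradicting $[\rho]\neq 0$. This proves both parts.

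I would prove Theorem~\ref{thm-vN} in the same breath by taking $A_n=L(\G,e^{2\pi i\sigma/n})$ with the canonical trace (so here $\theta_n=1/n$), noting that the maps $\rho_n:\G\to U(L(\G,e^{2\pi i\sigma/n}))$ are exactly of the form considered in Corollary~\ref{cor:proj-appl}, and that a hypothetical sequence $\pi_n:\G\to GL(L(\G,e^{2\pi i\sigma/n}))$ with $\|\rho_n(s)-\pi_n(s)\|\to 0$ would again force $[\rho]=0$ by Theorem~\ref{thm:basic}, contradicting $\langle[\rho],[c]\rangle=((1/n)\langle\sigma,c\rangle)_n\neq 0$. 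The only subtlety worth flagging is the bookkeeping in case (2): one must be careful that the bounded representative $\sigma$ chosen for $J([\sigma_b])$ still has nonzero image in $H^2(\G,\R)$, which is guaranteed since $J([\sigma_b])\neq 0$ by hypothesis, and that the uniform smallness of the defect really does follow from boundedness of $\sigma$ and $\theta_n\to 0$ — both are immediate from $\|\rho_n(s)\rho_n(t)\rho_n(st)^{-1}-1_n\|=|e^{2\pi i\theta_n\sigma(s,t)}-1|\leq 2\pi\theta_n\sup_{s,t}|\sigma(s,t)|$. I do not anticipate a genuine obstacle here; the theorem is essentially a packaging of the preceding results, and the main work is simply to check that the twisted $C^*$-algebras $C^*(\G,\sigma_{\theta_n})$ provide witnesses inside the stated class $\mathcal B$.
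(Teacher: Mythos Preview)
Your proposal is correct and follows essentially the same route as the paper: fix a normalized (bounded, in case (2)) representative $\sigma$ of a nonzero class, set $A_n$ to be a twisted group $C^*$-algebra with its canonical tracial state, use the projective maps $\rho_n$ satisfying $\rho_n(s)\rho_n(t)\rho_n(st)^{-1}=e^{2\pi i\theta_n\sigma(s,t)}1_n$, invoke Proposition~\ref{prompt} (equivalently Corollary~\ref{cor:proj-appl}) to get $[\rho]=q_*[\sigma]\neq 0$, and conclude via Theorem~\ref{thm:basic}. The only cosmetic difference is that the paper takes $A_n=C_r^*(\G,\sigma_{\theta_n})$ (or $L(\G,\sigma_{\theta_n})$ for Theorem~\ref{thm-vN}) rather than the full algebra $C^*(\G,\sigma_{\theta_n})$; both choices lie in the class of separable unital tracial $C^*$-algebras and the argument is unchanged.
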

\begin{proof} 
Throughout the proof we take either $A_n=C_r^*(\G,\sigma_{\theta_n})$ or $A_n=L(\G,\sigma_{\theta_n})$.
The later case yields the proof for Theorem~\ref{thm-vN}.
	(1) 
Let $\sigma\in Z^2(\G,\R)$ be  a normalized $2$-cocycle with $[\sigma]\neq 0$.
Let $\rho=(\rho_n)_n$ be a sequence of projective representations associated to $\sigma$ as in \ref{notation}. For example we may choose $\theta_n=1/n$.
Then $[\rho]=q_*[\sigma]\neq 0$ by Proposition~\ref{prompt}.
By Theorem~\ref{thm:basic}
 there exists no sequence of group homomorphisms $\pi_n: \G \to \GL(A_n)$ such that $\|\pi_n(s)-\rho_{n}(s)\|\to 0$, for all $ s \in \G.$

	(2) We are using notation and arguments from the proof of (1) above. If the comparison map is nonzero, then there is 2-cocycle $\sigma$ such that $\sup_{s,t\in \G}|\sigma(s,t)|<\infty$ and $[\sigma]\neq 0$ in $H^2(\G,\R).$
  In this case equation $\rho_n(s)\rho_n (t)\rho_n (st)^{-1}=e^{2 \pi i\theta_n \sigma(s,t)}1_n$ implies that
	\[\lim_{n\to \infty} \sup_{s,t\in \G}\|\rho_{n}(s)\rho_{n}(t)-\rho_{n}(st)\|= 0.\]
	The same argument as in the proof of (1) shows that  there exists no sequence of group homomorphisms $\pi_n: \G \to \GL(A_n)$ such that $\|\pi_n(s)-\rho_{n}(s)\|\to 0$, for all $ s \in \G.$
\end{proof}

\section{Matrix non-stability}\label{sec:6}
Matrix stability refers to stability with respect to the class
$\mathcal{B}=\{M_n(\C):n\geq 1\}.$
In section we discuss two applications to matrix stability.

Following \cite{arXiv:2204.10354}, 
if $\Lambda$ is a countable discrete group we say $x\in H^2(\Lambda;\Z)$ of finite type if it is given by a central extension
\begin{equation}
	\label{eq:ft}
	\begin{tikzcd}
		0\arrow{r} & \Z
		\arrow{r}{\iota} &
		{E} \arrow{r}{q}
		& \Lambda\arrow{r}{}& 1.
	\end{tikzcd}
\end{equation}
such that $E$ has a sequence of finite index subgroups $\{E_k\}_{k\in\N}$ so that $\iota(\Z)\cap\bigcap_{k}E_k=\{e\}.$
If $E$ is residually finite, then $x$  is of finite type.
It was shown in \cite{arXiv:2204.10354} that for a finitely generated nilpotent group $\Lambda$, all the elements of  
$H^2(\Lambda;\Z)$ are of finite type.

\begin{lemma}\label{lemma:pullback}
	If $\varphi:\G\rightarrow\Lambda$ is a group homomorphism and $y\in H^2(\Lambda,\Z)$ is of finite type, then so is $\varphi^*(y)\in H^2(\G,\Z)$. 
\end{lemma}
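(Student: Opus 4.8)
The plan is to pull back the given central extension along $\varphi$ and check that the resulting extension witnesses the finite-type property of $\varphi^*(y)$. Concretely, suppose $y\in H^2(\Lambda,\Z)$ is represented by a central extension $0\to\Z\xrightarrow{\iota}E\xrightarrow{q}\Lambda\to 1$ with finite-index subgroups $\{E_k\}_{k\in\N}$ such that $\iota(\Z)\cap\bigcap_k E_k=\{e\}$. Form the fibered product $E':=\{(e,g)\in E\times\G : q(e)=\varphi(g)\}$, with maps $\iota':\Z\to E'$, $\iota'(m)=(\iota(m),e)$, and $q':E'\to\G$, $q'(e,g)=g$. This is the standard construction realizing $\varphi^*(y)$: the sequence $0\to\Z\xrightarrow{\iota'}E'\xrightarrow{q'}\G\to 1$ is exact, central, and its class in $H^2(\G,\Z)$ is $\varphi^*(y)$.

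Next I would produce the required finite-index subgroups of $E'$. The natural candidates are the preimages under the first projection $\pi:E'\to E$, $\pi(e,g)=e$, namely $E'_k:=\pi^{-1}(E_k)=\{(e,g)\in E' : e\in E_k\}$. I would check two things. First, $[E':E'_k]<\infty$: since $\pi$ is a homomorphism and $E'_k=\pi^{-1}(E_k)$, we have $[E':E'_k]\le[E:E_k]<\infty$ (more precisely $[E':E'_k]=[\pi(E'):\pi(E')\cap E_k]$, which is finite because $[E:E_k]$ is). Second, $\iota'(\Z)\cap\bigcap_k E'_k=\{e\}$: an element of this intersection has the form $(\iota(m),e)$ with $\iota(m)\in\bigcap_k E_k$, hence $\iota(m)\in\iota(\Z)\cap\bigcap_k E_k=\{e\}$, so $m=0$ and the element is trivial. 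This gives exactly the defining condition of finite type for $\varphi^*(y)$.

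The one point that needs a little care — and the only place I expect any friction — is confirming that the extension $E'$ constructed as a fibered product genuinely has class $\varphi^*(y)$ in $H^2(\G,\Z)$, i.e.\ that the pullback of a central extension in the sense of $H^2$-classes coincides with the fibered-product construction. This is classical (see \cite{iBrown:book-cohomology}), and can be verified at the level of cocycles: if $\sigma$ is a normalized $2$-cocycle representing $y$, obtained from a set-theoretic section $\gamma:\Lambda\to E$ of $q$, then $g\mapsto(\gamma(\varphi(g)),g)$ is a set-theoretic section of $q'$ whose associated cocycle is $\sigma(\varphi(\cdot),\varphi(\cdot))$, which is precisely a representative of $\varphi^*(y)$. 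Everything else is a routine diagram chase, so the proof is short.
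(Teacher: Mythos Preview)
Your proof is correct and is essentially identical to the paper's argument: the fibered product $E'$ you build is exactly the $\widetilde{E}$ in the paper's pullback diagram, your first projection $\pi$ is the paper's $\widetilde{\varphi}$, and your subgroups $E'_k=\pi^{-1}(E_k)$ are the paper's $\widetilde{\varphi}^{-1}(E_k)$. You supply a bit more detail (the index bound and the cocycle verification that the fibered product realizes $\varphi^*(y)$), but the strategy and the key steps match the paper exactly.
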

\begin{proof}
	Representing $x$ as a central extension as in \eqref{eq:ft}, then, as explained in  \cite[chapter IV.3 Exercise 1]{iBrown:book-cohomology}, $\varphi^*(x)$ corresponds to a central extension as in the diagram
	\[
	\begin{tikzcd}
		0 \ar[r]&	\Z\ar[r, "\widetilde{\iota}"] \ar[equal]{d}& \widetilde{E}
		\ar[r, "\widetilde{q}"]\ar[d,"\widetilde{\varphi}"] &  \G\ar[r]\ar[d, "{\varphi}"] & 0\\
		0 \ar[r]&	\Z\ar[r, "{\iota}"] & E\ar[r,"{q}"] &  \Lambda\ar[r] & 0
	\end{tikzcd}
	\]
	If  $(E_k)$  is a family of finite index subgroups of ${E}$ so that ${\iota}(\Z)\cap \left(\bigcap_k {E}_k \right)=\{e\}$, then $(\widetilde{\varphi}^{-1}(E_k))$ is a family of finite index subgroups of $\widetilde{E}$ and $\widetilde{\iota}(\Z)\cap \left(\bigcap_k \widetilde{\varphi}^{-1}({E}_k) \right)=\{e\}.$
 \end{proof}

For $1\le p<\infty,$ the unnormalized Schatten $p$-norm on $M_n(\C)$ is $||M||_p=\Tr(|M|^p)^{1/p}$. We can extend this to $p=\infty$ by defining the Schatten $\infty$-norm to be the operator norm. Stability in unnormalized Schatten $p$-norm is defined in the same way one defines matricial stability but with the operator norm replaced by the $p$-norm. Of particular interest is the unnormalized 2-norm, also called the Frobenius norm. 
We analogously define uniform-to-local stability in the unnormalized Schatten \( p \)-norm, similar to uniform-to-local matricial stability, but with the operator norm replaced by the unnormalized Schatten \( p \)-norm.

\begin{corollary}\label{cor:321} \, \begin{enumerate}
		\item Let $\G$ be a finitely generated group such that   $ H^2(\G;\Z)$ has a finite type element $x$ of infinite order. Then $\G$ is not local-to-local (matricially) stable. If we furthermore assume that
 $\G$ is  hyperbolic,
		then $\G$ is not uniform-to-local (matricially) stable. 	
		\item Suppose that $\G$ is a  hyperbolic group, with $\beta,\psi\in H^1(\G,\Z)$ so that $\beta\smile\psi\in H^2(\G,\Z)$ is non-torsion. Let $1<p\le\infty$. Then $\G$ is not uniform-to-local stable in the unnormalized Schatten $p$-norm.
	\end{enumerate}
\end{corollary}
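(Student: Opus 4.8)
\textbf{Proof proposal for Corollary~\ref{cor:321}.}

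The plan is to reduce each statement to an application of the non-stability machinery built in Sections~\ref{sec:3}--\ref{sec:5}, using the hypothesis to produce a suitable sequence of matricial projective representations that is not perturbable to honest homomorphisms. For part~(1), I would start from the finite-type element $x\in H^2(\G;\Z)$ of infinite order, realized by a central extension $0\to\Z\to E\xrightarrow{q}\G\to1$ with finite-index subgroups $\{E_k\}$ satisfying $\iota(\Z)\cap\bigcap_k E_k=\{e\}$. Passing to the quotients $E/N_k$ (where $N_k$ is the normal core of $E_k$, still of finite index) produces finite quotients in which the image of a chosen generator of $\iota(\Z)$ has order $m_k\to\infty$; equivalently, after tensoring with $\C$, one gets finite-dimensional unitary representations of $E$ on which $\iota(1)$ acts by a primitive $m_k$-th root of unity. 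Restricting these to the fibers over $\G$ via the set-theoretic section $\gamma$, and writing $\theta_k=1/m_k$ (or any sequence $\theta_n\searrow0$ along which the roots of unity $e^{2\pi i\theta_n}$ are realized), one obtains unital maps $\rho_n:\G\to U(M_{k(n)})$ factoring through $C^*(\G,\sigma_{\theta_n})$ for a fixed integral cocycle $\sigma$ representing $x$; that is, $\rho_n(s)\rho_n(t)\rho_n(st)^{-1}=e^{2\pi i\theta_n\sigma(s,t)}1$. This is exactly the situation of Notation~\ref{notation} with $A_n=M_{k(n)}$, so Corollary~\ref{cor:proj-appl} applies: since $x$ has infinite order there is a $2$-cycle $c\in Z_2(\G,\Z)$ with $\langle\sigma,c\rangle\neq0$, whence $\langle[\rho],[c]\rangle=(\theta_n\langle\sigma,c\rangle)_n\neq0$ in $Q(\R)$, and $[\rho]\neq0$, so no perturbing sequence of homomorphisms exists. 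This gives failure of local-to-local matricial stability.

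For the uniform-to-local half of part~(1), the extra hypothesis that $\G$ is hyperbolic enters through Mineyev's surjectivity of the comparison map $J:H^2_b(\G,\R)\to H^2(\G,\R)$ (as invoked already for Corollary~\ref{thm:AA}): the class $x_\R\in H^2(\G,\R)$ (the image of $x$, nonzero since $x$ is non-torsion) can be represented by a \emph{bounded} cocycle $\sigma$. With $\sigma$ bounded, the identity $\rho_n(s)\rho_n(t)\rho_n(st)^{-1}=e^{2\pi i\theta_n\sigma(s,t)}1$ forces $\lim_n\sup_{s,t}\|\rho_n(s)\rho_n(t)-\rho_n(st)\|=0$, i.e.\ $(\rho_n)$ is a \emph{uniform} asymptotic homomorphism, while the same pairing argument as above shows it is not perturbable even locally. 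One subtlety to handle here: the bounded cocycle representing $x_\R$ is real-valued, not integer-valued, so $\rho_n$ should be taken of the form $\pi_n\circ\rho_{\theta_n}$ with $\pi_n:C^*(\G,\sigma_{\theta_n})\to M_{k(n)}$ a finite-dimensional representation; the existence of such matricial quotients with $\tau_n(1_n)=1$ for the relevant $\theta_n$ is what needs the finite-type structure of the integral class, so one must argue that the bounded real cocycle and the finite-type integral cocycle can be chosen to represent classes with the same (nonzero) image in $H^2(\G,\R)$ and that Corollary~\ref{cor:proj-appl}'s numerical obstruction survives. This matching step is the main obstacle in part~(1).

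For part~(2), I would use that on a hyperbolic group a cup product $\beta\smile\psi$ of two integral $1$-cohomology classes is automatically represented by a bounded cocycle (cup products of bounded classes are bounded, and $H^1_b=0$ forces... actually more care: one uses that $\beta,\psi$ lift to bounded $1$-cochains whose coboundaries are controlled, giving a bounded $2$-cocycle representing $\beta\smile\psi$ — this is standard for hyperbolic groups). Since $\beta\smile\psi$ is non-torsion, its image in $H^2(\G,\R)$ is nonzero, so as in part~(1) one builds a uniform sequence of matricial projective representations $\rho_n$ with a nonzero obstruction class. The point of passing to the unnormalized Schatten $p$-norm for $1<p\le\infty$ is that a cup-product cocycle arising from two homomorphisms $\G\to\Z$ has a particularly rigid structure: writing the projective representation concretely (e.g.\ on $\ell^2$ of a quotient, or via an explicit Weyl-type pair of unitaries $U,V$ with $UV=e^{2\pi i\theta}VU$ acting on $M_{k(n)}$), one checks the multiplicative defects $\rho_n(s)\rho_n(t)-\rho_n(st)$ are scalar multiples of $1$ of size $\approx\theta_n$, hence have unnormalized Schatten $p$-norm $\approx\theta_n k(n)^{1/p}$, and by choosing $k(n)$ growing slowly relative to $m_n=1/\theta_n$ one arranges this to tend to $0$; meanwhile the de la Harpe--Skandalis obstruction, computed via the trace $\Tr$ rather than a normalized trace, remains nonzero because $\Tr(\log(\prod[\rho(\bar a_i),\rho(\bar b_i)]))$ picks up $k(n)\cdot\theta_n\langle\sigma,c\rangle$ up to integer ambiguity, which one keeps away from $0$ by a Diophantine choice of the parameters. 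The hard part here will be the simultaneous bookkeeping: choosing the dimensions $k(n)$ and parameters $\theta_n$ so that the Schatten-$p$ multiplicative defect vanishes while the trace obstruction (mod $\Tr_*(K_0)=\Z$ in the matrix case, so really mod $\Z$) stays bounded away from $0$ — this is a number-theoretic balancing act and is where most of the real work of part~(2) lies.
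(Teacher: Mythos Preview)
Your outline for the local-to-local half of part~(1) is essentially the paper's construction and is fine. The two places where you diverge from the paper are the ``matching step'' in part~(1) and essentially all of part~(2), and in both cases the paper's argument is simpler because it exploits structure you are not using.

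For the uniform-to-local half of part~(1), your worry about a bounded \emph{real} cocycle versus the integral finite-type extension is a genuine issue with your plan, but the paper sidesteps it entirely: for hyperbolic groups the comparison map $H^2_b(\Gamma,\Z)\to H^2(\Gamma,\Z)$ is already surjective (this follows from Mineyev's result, see \cite[ch.~2]{Frigerio}), so one may take the cocycle $\sigma$ for the \emph{integral} extension $0\to\Z\to E\to\Gamma\to1$ to be bounded and $\Z$-valued. Then the section $\gamma$ satisfies $\gamma(s)\gamma(t)\gamma(st)^{-1}=\iota(\sigma(s,t))$ on the nose, the finite-type representations $\phi_n:E\to U(k_n)$ send $\iota(1)$ to $e^{2\pi i\theta_n}$, and $\rho_n=\phi_n\circ\gamma$ has uniformly small defect with no matching needed.

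For part~(2) you have missed the role of the cup-product hypothesis. The point is not analytic but structural: viewing $\beta,\psi$ as homomorphisms $\Gamma\to\Z$, the map $\varphi=(\beta,\psi):\Gamma\to\Z^2$ satisfies $\varphi^*(\pi_1\smile\pi_2)=\beta\smile\psi$, so the integral extension for $\beta\smile\psi$ is pulled back from the Heisenberg extension $0\to\Z\to\HH_3\to\Z^2\to0$. One then composes a (bounded, $\Z$-valued, by the previous paragraph) section $\gamma:\Gamma\to E$ with $\tilde\varphi:E\to\HH_3$ and the explicit Voiculescu--Kazhdan representations $\psi_n:\HH_3\to U(n)$, for which the central element acts by $e^{2\pi i/n}$. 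This forces $k(n)=n$ and $\theta_n=1/n$, so the Schatten-$p$ defect is at most $\frac{2\pi|\sigma(s,t)|}{n}\cdot n^{1/p}$, which tends to $0$ uniformly precisely because $p>1$. There is no ``number-theoretic balancing act'': the dimensions and phases are dictated by the Heisenberg representations, and the exponent $1/p-1<0$ does the work. Your worry about the obstruction being taken mod $\Z$ is also misplaced: the paper uses the \emph{normalized} trace $\tau_n$ throughout, so $\tau_n(1_n)=1$ and $\langle[\rho],[c]\rangle=(\theta_n\langle\sigma,c\rangle)_n$ is a nonzero element of $Q(\C)$ with no integer ambiguity; since $\|\cdot\|\le\|\cdot\|_p$, Theorem~\ref{thm:basic} already rules out perturbation in Schatten norm.
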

\begin{proof} {(1)} We prove only the second part of (1). The first part is proved by a similar argument in~\cite{arXiv:2204.10354} and in fact it is generalized by Theorem~\ref{tprop:A}.
	Represent $x$ by a central extension as in \eqref{eq:ft}.
	Since the comparison map $J: H^2_b(\Gamma,\R)\to H^2(\Gamma,\R)$
	is surjective, so is the map $J: H^2_b(\Gamma,\Z)\to H^2(\Gamma,\Z)$, \cite[ch.2]{Frigerio}. Consequently, we may arrange that this extension is constructed from a bounded $2$-cocycle $\sigma,$ $|\sigma(s,t)|\leq M$ for all $s,t \in \G$.
	In particular, we have a section $\gamma:\G \to {E}$ of ${q}$ such that
 \(\gamma(s)\gamma(t)\gamma(st)^{-1}={\iota}(\sigma(s,t)).\)
	Since the extension that represents $x$ is of finite type, as shown in \cite{arXiv:2204.10354}, there is a sequence of finite dimensional representations  $\phi_n:E \to U(k_n),$ $k_n\to \infty$ such that \[\phi_n(\iota(1))=e^{2\pi i\theta_n}\,\, \text{for some }\,\, \theta_n \searrow 0.\]
	Consider the sequence of unital maps $\rho_n:=\Phi_n\circ \gamma:\G \to U(k_n).$ They satisfy
	\begin{equation*}\rho_n (s)\rho_n(t)\rho_n (st)^{-1}=e^{2 \pi i\theta_n \sigma(s,t)}\quad \text{and hence}\,\,\lim_{n\to \infty} \left(\sup_{s,t\in \G}  \|\rho_n(st)-\rho_n(s)\rho_n(t)\|\right)=0. \end{equation*}
 Since \(\langle [\rho],[c]\rangle=(\theta_n\tau_n(1_n)\langle\sigma,c\rangle)_n\neq 0,\)   by the same calculation as in the proof of Proposition~\ref{prompt}, one cannot perturb $(\rho_n)$ to a sequence of representations.

	{(2)} Now suppose that $x\in H^2(\G;\Z)$ can be written as $\alpha\smile\beta$. Here since $H^1(\G,\Z)\cong\Hom(\G,\Z)$ we can view $\alpha$ and $\beta$ as homomorphisms from $\G$ to $\Z$. Define $\varphi:\G \to \Z^2$ by   $\varphi(s)=(\alpha(s),\beta(s)).$  Note note that $H^1(\Z^2,\Z)\cong\Hom(\Z^2,\Z)$ is generated by the two canonical projections from $\Z^2$ to $\Z$ denoted $\pi_1$ and $\pi_2$, and $H^2(\Z^2,\Z)$ is generated by $\pi_1\smile\pi_2$. It follows that $\varphi^*(\pi_1\smile\pi_2)=\varphi^*(\pi_1)\smile\varphi^*(\pi_2)=\alpha\smile\beta=x$.  
	
	By \cite[chapter IV.3 Exercise 1]{iBrown:book-cohomology} there is a commutative diagram
	\begin{equation*}\begin{tikzcd}
			0\arrow{r} & \Z
			\arrow{r}{\iota}\ar[equal]{d} &
			E\arrow{d}{\Tilde\varphi} \arrow{r}{\tilde q}
			& \Gamma\arrow{r}{}\arrow{d}{\varphi}& 0
			\\
			0\arrow{r} & \Z
			\arrow{r}{} &
			\HH_3 \arrow{r}{q}
			& \Z^2\arrow{r}{}&0
		\end{tikzcd}\end{equation*}
	
	Here $\HH_3$ is the discrete Heisenberg group, generated by $a,b,z$ with the relations $ba=abz$, $za=az$, and $zb=bz$. 
	
	Pick a set-theoretic section $\gamma:\G\rightarrow E$ corresponding to a normalized bounded 2-cocycle $\sigma$. Set $\lambda_n=e^{2\pi i/n}$ and let $\psi_n$ the representation of $\HH_3$ (\cite{Kazhdan-epsilon},  \cite{Voi:unitaries}) defined by 
	$$
	\psi_n(a)=\left(\begin{matrix}
		0 & 0 &  0  &\cdots &{1} \cr
		{1}& 0 &  0  &\cdots & 0 \cr
		0 &{1} &  0  &\cdots & 0 \cr
		%0 &0 &  {1} &\cdots& 0 \cr
		&  &  &\cdots & \cr
		0 &  0&  \cdots  &{1}& 0 \cr
	\end{matrix}\right),\,\,
	\psi_n(b)=\left(\begin{matrix}
		{\lambda_n} & 0 &  0  & 0 & 0\cr
		0 & {\lambda_n^2} &  0  & 0 & 0\cr
		%0 &0& {\lambda_n^3} & 0 & 0\cr
		0 &  0&  \lambda_n^3 &{\cdot}& 0\cr
		&  &  &\cdots & \cr
		0 &  0&  0 & {\cdots}& {\lambda_n^n}\cr
	\end{matrix}\right),\quad \psi_n(z)=\lambda_n1_n.
	$$
	 Then for $\rho_n=\psi_n\circ\tilde\varphi\circ\gamma$ we have
 $\rho_n(s)\rho_n(t)\rho_n(st)^{-1}=e^{2\pi i\sigma(s,t)/n}.$ It follows that
 \[||\rho_n(s)\rho_n(t)-\rho_n(st)||_p\le||\rho_n(s)\rho_n(t)-\rho_n(st)||n^{1/p}\le\frac{2\pi|\sigma(s,t)|}{n}n^{1/p}\]and this converges to zero uniformly in $s$ and $t$. The rest of the proof is identical to the proof of {(1)}.
\end{proof}
\begin{remark}
	Corollary~\ref{cor:321} reproves a result of Kazhdan \cite{Kazhdan-epsilon}. Indeed, if $\G_g$ is a surface group of genus $g\geq 2$, 
	then the generator $x$ of $H^2(\G_g,\Z)$ is of finite type. This is seen by applying Lemma~\ref{lemma:pullback} with 
	$\Lambda=\Z^2$ and $y$ the generator of $H^2(\Z^2,\Z)$ corresponding to the central extension 
	\begin{equation*}
		\begin{tikzcd}
			0\arrow{r} & \Z
			\arrow{r} &
			{\mathbb{H}_3} \arrow{r}
			& \Z^2\arrow{r}{}& 0.
		\end{tikzcd}
	\end{equation*}
	It is of finite type since the Heisenberg group $\mathbb{H}_3$ is residually finite.
\end{remark}
The following is Theorem~\ref{tprop:Ai} from the introduction.
\begin{theorem}\label{tprop:A}
	If $\G$ is a countable discrete group and for some $[\sigma]\in H^2(\Gamma,\R)\setminus \{0\}$  there  is a sequence 
	$\theta_n \searrow 0$  so that for each $n$,  the twisted full group $C^*$-algebra, $C^*(\Gamma,\sigma_{\theta_n})$ has a nonzero MF quotient, then $\Gamma$ is not matricially stable.
\end{theorem}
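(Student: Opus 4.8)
The plan is to use the MF quotients to build a single asymptotically multiplicative sequence of unital maps $\rho_n\colon\G\to U(M_{d_n})$ with $d_n\to\infty$ whose class $[\rho]\in H^2(\G,Q(\R))$ is nonzero; Theorem~\ref{thm:basic} then forbids any perturbation of $(\rho_n)$ to genuine representations, so $\G$ is not matricially stable. Fix a normalized $2$-cocycle $\sigma$ representing the given class in $H^2(\G,\R)\setminus\{0\}$, and, via the universal coefficient isomorphism $H^2(\G,\R)\cong\Hom(H_2(\G,\Z),\R)$, choose a $2$-cycle $c=\sum_{j=1}^m k_j[a_j|b_j]\in Z_2(\G,\Z)$ with $\langle\sigma,c\rangle\neq 0$.

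For each $n$, let $q_n\colon C^*(\G,\sigma_{\theta_n})\to D_n$ be the quotient onto a nonzero MF algebra and let $\rho_{\theta_n}\colon\G\to U(C^*(\G,\sigma_{\theta_n}))$ be the canonical $\sigma_{\theta_n}$-projective representation of \ref{notation}, so that $\rho_{\theta_n}(s)\rho_{\theta_n}(t)\rho_{\theta_n}(st)^{-1}=e^{2\pi i\theta_n\sigma(s,t)}1$. Since $D_n$ is MF (amplifying if needed so that the matrix sizes tend to infinity), there is a sequence of unital, approximately $\C$-linear and $*$-preserving maps $\phi^{(n)}_m\colon D_n\to M_{k(n,m)}$, $m\in\N$, with $\|\phi^{(n)}_m(xy)-\phi^{(n)}_m(x)\phi^{(n)}_m(y)\|\to 0$ as $m\to\infty$ for all $x,y\in D_n$. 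Set $\psi^{(n)}_m:=\phi^{(n)}_m\circ q_n\circ\rho_{\theta_n}$; because $\rho_{\theta_n}(s)$ is a unitary and $q_n$ a unital $*$-homomorphism, for $m$ large $\psi^{(n)}_m(s)$ is invertible with unitary polar part $\rho^{(n)}_m(s)$ satisfying $\|\rho^{(n)}_m(s)-\psi^{(n)}_m(s)\|\to 0$, and we put $\rho^{(n)}_m(e):=1$. Combining the identity for $\rho_{\theta_n}$ with $\C$-linearity and asymptotic multiplicativity of $\phi^{(n)}_m$ gives
\[
\bigl\|\rho^{(n)}_m(s)\rho^{(n)}_m(t)\rho^{(n)}_m(st)^{-1}-e^{2\pi i\theta_n\sigma(s,t)}1\bigr\|\longrightarrow 0\quad\text{as }m\to\infty,\ \text{for all }s,t\in\G.
\]

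Now diagonalize. Enumerate $\G=\{g_1,g_2,\dots\}$ and, for each $n$, choose $m(n)$ with $d_n:=k(n,m(n))\geq n$ and with the quantity above below $\theta_n^2$ whenever $s,t\in\{g_1,\dots,g_n\}$ (in particular for the finitely many group elements occurring in $c$, once $n$ is large). Put $\rho_n:=\rho^{(n)}_{m(n)}\colon\G\to U(M_{d_n})$ and $\tau_n:=\tfrac1{d_n}\Tr$, so $\sup_n\|\tau_n\|=1$. Since $\theta_n\searrow 0$, $(\rho_n)_n$ is asymptotically multiplicative in the sense of Remark~\ref{eq:asymptotic}, so Theorem~\ref{thm:basic} produces a class $[\rho]\in H^2(\G,Q(\R))$, and it remains to show $[\rho]\neq 0$. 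Pair it with $c$ through \eqref{eq:pairr}: writing $w_n=\prod_{j=1}^m\bigl(\rho_n(a_j)\rho_n(b_j)\rho_n(a_jb_j)^{-1}\bigr)^{k_j}$, each factor is a unitary within $O(\theta_n^2)$ of the scalar unitary $e^{2\pi i\theta_n\sigma(a_j,b_j)}1$, so $w_n\to 1$ and $\|w_n-e^{2\pi i\theta_n\langle\sigma,c\rangle}1\|=O(\theta_n^2)$; hence for large $n$ the $n$-th coordinate of $\langle[\rho],[c]\rangle$ is $\tfrac1{2\pi i}\tau_n(\log w_n)$, and by the local Lipschitz bound for $\log$ near $1$ together with $\tau_n$ being a tracial state, this coordinate equals $\theta_n\langle\sigma,c\rangle+O(\theta_n^2)$, which is nonzero for all large $n$. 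Thus $\langle[\rho],[c]\rangle\neq 0$ in $Q(\R)=c_0(\N,\R)/c_{00}(\N,\R)$, so $[\rho]\neq 0$; by Theorem~\ref{thm:basic} there is no sequence of homomorphisms $\pi_n\colon\G\to\GL(M_{d_n})$ with $\|\rho_n(s)-\pi_n(s)\|\to 0$, and a fortiori none into $U(M_{d_n})$. Therefore $\G$ is not matricially stable.

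The step I expect to be the main obstacle is this last quantitative estimate. Unlike the finite-type setting of Corollary~\ref{cor:321}, where the approximating maps are \emph{exactly} $\sigma_{\theta_n}$-projective and Corollary~\ref{cor:proj-appl} applies verbatim, the matrix maps extracted from an MF quotient are only \emph{approximately} $\sigma_{\theta_n}$-projective, so Corollary~\ref{cor:proj-appl} is unavailable and one must drive the approximation error (hence the choice of $m(n)$) below the scale of $\theta_n$ --- here the bound $\theta_n^2$ --- so that the $O(\theta_n^2)$ perturbation does not absorb the main term $\theta_n\langle\sigma,c\rangle$ in the Kronecker pairing. The remaining ingredients --- that an MF algebra admits unital, asymptotically multiplicative matrix approximations, the unitarization via polar decomposition, and the Lipschitz estimate for $\log$ --- are routine.
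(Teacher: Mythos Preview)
Your proof is correct and is essentially the argument the paper sketches as its ``more direct (but somewhat more technical) alternate proof'' immediately after the main proof: extract from each MF quotient a matricial approximant close enough to being $\sigma_{\theta_n}$-projective that the error is dominated by $\theta_n\langle\sigma,c\rangle$, diagonalize, and invoke Theorem~\ref{thm:basic}. The paper uses the threshold $\tfrac12\theta_n|\langle\sigma,c\rangle|$ where you use $\theta_n^2$; either works.

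The paper's \emph{primary} proof is different. It argues by contradiction: assuming matricial stability, one obtains for each $(F_n,\varepsilon_n)$ a pair $(S_n,\delta_n)$ as in Remark~\ref{rem:stability}. By the argument of Theorem~\ref{thm:A}(1) applied with $A_n$ the given MF quotients, one produces $(S_n,\tfrac12\delta_n)$-representations $\rho_n\colon\G\to U(D_n)$ (each $D_n$ of the form $\prod_i M_{k_n(i)}/\bigoplus_i M_{k_n(i)}$) that are provably non-perturbable. One then lifts $\rho_n$ to an $(S_n,\delta_n)$-representation into $\prod_i M_{k_n(i)}$ and applies the assumed matricial stability \emph{componentwise} to perturb the lift to a genuine representation $\pi'_n$; pushing $\pi'_n$ down to $D_n$ contradicts non-perturbability of $\rho_n$. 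This route avoids the explicit diagonalization and the quantitative $\log$ bookkeeping you flagged as the main obstacle; your direct route has the advantage of exhibiting an explicit matricial witness to non-stability.
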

\begin{proof} Seeking a contradiction, suppose that $\G$ is matricially stable. Write $\G$ as the union of an increasing sequence of finite sets $F_n$ and let $\ep_n=1/n.$ By matricial stability, there is an increasing sequence of finite sets $(S_n)$ and a sequence $0<\delta_n \searrow 0$ such that for any $(S_n,\delta_n)$-representation $\rho_n:\G \to U(k_n)$ there is
	a true representation $\pi_n:\G\to U(k_n)$ such that $\|\rho_n(s)-\pi_n(s)\|<\ep_n$ for all $s\in F_n$.
	
	By assumption, for each $n$ there is a unital homomorphism $C^*(\Gamma,\sigma_{\theta_n})\to A_n$ where $A_n$ is a $C^*$-algebra of the form 
	$$ A_n =\frac{\prod  M_{k_n(i)}}{\oplus M_{k_n(i)}}$$
	for which we fix a tracial state. Consider the family $\mathcal{B}:=\{A_n:n\geq 1\}$.
	As shown in the first part of the proof of Theorem~\ref{thm:A},   
	since $\theta_n \searrow 0,$ we can find a sequence $(D_n)$ of $C^*$-algebras in $\mathcal{B}$ together with a sequence of unital $(S_n,\frac{1}{2}\delta_n)$-representations $\rho_n:\G \to U(D_n)$ for which there is no sequence of representations $\pi_n:\G \to U(D_n)$ such that $\|\rho_n(s)-\pi_n(s)\|\to 0$ for all $s\in \G$. Write $D_n=P_n/Q_n$
	where \[P_n=\prod  M_{k_n(i)}\quad \text{and} \quad  Q_n=\bigoplus M_{k_n(i)}.\]
	Let $q_n:P_n \to D_n$ be the quotient map.
	It is routine to find a sequence of unital $(S_n,\delta_n)$-representations $\rho'_n:\G \to U(P_n)$ such that $q_n\circ \rho_n'=\rho_n.$ By our choice of $(S_n,\delta_n),$ it follows that there is a sequence of representations $\pi_n':\G \to U(P_n)$ such that \(\|\rho_n'(s)-\pi_n'(s)\|<\ep_n\) for all $s\in F_n$.
	Thus if we set $\pi_n:=q_n \circ \pi_n', $
	then $(\pi_n)$ is a sequence of representations $\pi_n:\G \to U(D_n)$ such that $\|\rho_n(s)-\pi_n(s)\|\to 0$ for all $s\in \G$. This contradicts our choice of $(\rho_n)$. 
\end{proof}
{For a more direct (but somewhat more technical)  alternate proof   of Theorem~\ref{tprop:A} one can proceed as follows.  Consider the sequence $(\rho_n)$ defined in \eqref{eqn:gestimate} where $A_n$ are MF-algebras.
 There is a 2-cocycle $c\in Z_2(\G,\Z)$  with nontrivial Kronecker pairing  $\langle\sigma,c\rangle\neq 0$.
	As seen earlier, the pairing 
	$H^2(\G,Q(\C))\times H_2(\G,\Z)\to Q(\C)$ yields
    \[\langle [\rho],[c]\rangle=(\theta_n\tau_n(1_n)\langle\sigma,c\rangle)_n\neq 0.\]	
Since $A_n$ is MF, for each $n\geq 1$ there is a sequence of unital maps $\{\rho_{n,i}:\G \to U(m_{n,i})\}_i$ such that
for all $n\geq 1$,
\[\lim_i \|\rho_{n,i}(s)\rho_{n,i}(t)\rho_{n,i}(st)^{-1}-e^{2\pi i \theta_n\sigma(s,t)}1\|=0,\quad \text{for all}\quad s,t\in \G.\]
We shall use the fact  that for any unital Banach algebra $A$,  if   $u,v\in A$ with $\|u-1\|<1/2$ and $\|v-1\|<1/2$, then $\|\log u-\log v\|<2\|u-v\|$.
Since the support of $c$ as in \eqref{eq:pre} is finite, we can find a fast increasing sequence $(i_n)$ such that for each $n,$
if we define $\rho_n'=\rho_{n,i_n}$, 
 and $\lambda_n$ is the $n^{th}$ component of $\langle [\rho'],[c]\rangle$, namely
 \begin{equation*}
\lambda_n=\sum_{j=1}^mk_j\frac{1}{2 \pi i}  \tau_{n}\left(\log \left(\rho_{n,i_n}(a_j)\rho_{n,i_n}(b_j)\rho_{n,i_n}(a_jb_j)^{-1}\right)\right),
\end{equation*}
 then 
  $|\lambda_n-\theta_n\tau_n(1_n)\langle\sigma,c\rangle|<\frac{1}{2}\theta_n\tau_n(1_n)|\langle\sigma,c\rangle|$
  and hence $\lambda_n\neq 0$.
We conclude the proof by applying Theorem~\ref{thm:basic} to $\rho'$.}

 %\newpage
\addcontentsline{toc}{section}{References} 
{\small
\bibliographystyle{abbrv}
%\bibliography{Operator24}

}
\end{document}